\documentclass[11pt]{amsart}

\usepackage{amsmath,amssymb,amsthm}
\usepackage{thmtools,enumerate}
\usepackage{mathtools}
\usepackage{array}
\usepackage{mathrsfs}  

\usepackage[german,english]{babel}
\usepackage[autostyle]{csquotes}

\usepackage[all]{xy}
\usepackage{pstricks}

\usepackage{tikz-cd}
\usetikzlibrary{babel}

\usepackage{hyperref}
\hypersetup{
	colorlinks=true,
	linkcolor=red,
	citecolor=blue}

\theoremstyle{plain}

\newtheorem{thm}{Theorem}[section]
\newtheorem{prop}[thm]{Proposition}
\newtheorem{lem}[thm]{Lemma}
\newtheorem{cor}[thm]{Corollary}

\newtheorem{thmA}{Theorem}

\theoremstyle{definition}

\newtheorem{dfn}[thm]{Definition}
\newtheorem{dfn-prop}[thm]{Definition-Proposition}
\newtheorem{rem}[thm]{Remark}

\newtheorem{notation}[thm]{Notation}

\newcommand{\Z}{\mathbb{Z}}

\newcommand{\Q}{\mathbb{Q}}
\newcommand{\R}{\mathbb{R}}
\newcommand{\C}{\mathbb{C}}
\newcommand{\pr}{\mathbb{P}}

\newcommand{\cal}[1]{\mathcal{#1}}
\newcommand{\smvee}{{\scriptscriptstyle{\vee}}}
\newcommand{\smperp}{{\scriptscriptstyle{\perp}}}
\newcommand{\smone}{{\scriptscriptstyle{(1)}}}
\newcommand{\smonestar}{{\scriptscriptstyle{(1)*}}}
\newcommand{\smonestarstar}{{\scriptscriptstyle{(1)**}}}

\newcommand{\re}{\mathrm{re}}

\DeclareMathOperator{\mult}{mult}

\DeclareMathOperator{\Eff}{Eff}

\DeclareMathOperator{\Nef}{Nef}
\DeclareMathOperator{\Amp}{Amp}
\DeclareMathOperator{\NefE}{Nef^e}
\DeclareMathOperator{\Comp}{Comp}
\DeclareMathOperator{\Cr}{Cr}
\DeclareMathOperator{\GL}{GL}
\DeclareMathOperator{\Aut}{Aut}
\DeclareMathOperator{\Int}{Int}
\DeclareMathOperator{\Supp}{Supp}

\DeclareMathOperator{\nod}{nod}
\DeclareMathOperator{\gen}{gen}
\DeclareMathOperator{\irr}{irr}

\DeclareMathOperator{\Ker}{Ker}

\begin{document}
	\title[Generalised Cone Conjecture, III: Non-generic Sakai surfaces]{Generalised Cone Conjecture, III:\\ Non-generic Sakai surfaces}
 
	\author[V.\ Lazi\'c]{Vladimir Lazi\'c}
	\address{Fachrichtung Mathematik, Campus, Geb\"aude E2.4, Universit\"at des Saarlandes, 66123 Saarbr\"ucken, Germany}
	\email{lazic@math.uni-sb.de}

        \author[I.\ Stenger]{Isabel Stenger}
        \address{Institute of Algebraic Geometry, Leibniz University Hannover, Welfengarten 1, 30167 Hannover, Germany}
        \email{stenger@math.uni-hannover.de}
		
	\author[Z.\ Xie]{Zhixin Xie}
	\address{Institut \'Elie Cartan de Lorraine, Universit\'e de Lorraine, 54506 Nancy, France}
	\email{zhixin.xie@univ-lorraine.fr}
	
	\thanks{2020 \emph{Mathematics Subject Classification}: 14J26, 17B22, 14E07, 14C20, 20F55. \newline
		\indent \emph{Keywords}: Cone Conjecture, rational surfaces, root systems, Weyl groups, Cremona isometries.
	}
	
	\begin{abstract}
    We show that no meaningful version of the Cone Conjecture holds for non-generic Sakai surfaces.
		
	\end{abstract}

	\maketitle
	
	\begingroup
		\hypersetup{linkcolor=black}
		\setcounter{tocdepth}{1}
		\tableofcontents
	\endgroup

\section{Introduction}
In \cite{LSX26a,LSX26b} we proved the Generalised Cone Conjecture in dimension two. One of the main steps of the proof is the following result \cite[Theorem C]{LSX26b}, which shows that generic Sakai surfaces satisfy this conjecture.

\setcounter{thmA}{2}
\begin{thmA}\label{thm:main_thm_generic}
  Let $X$ be a  generic Sakai surface, and let $D$ be the unique effective divisor in $|{-}K_X|$. Then there exists a rational polyhedral fundamental domain for the action of the group $\Cr(X,D)$ on $\Nef^e(X)$.
\end{thmA}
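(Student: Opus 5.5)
We would prove the theorem by passing to the lattice-theoretic description of a generic Sakai surface and then applying the standard Looijenga-type criterion for the existence of rational polyhedral fundamental domains. Let $X$ be obtained by blowing up $\pr^2$ in nine points with $-K_X$ effective, and let $D\in|{-}K_X|$ be the unique member, of some Kodaira type. Set $Q=\langle K_X\rangle^{\perp}\subseteq \operatorname{Pic}(X)$, which is an affine lattice of type $E_8^{(1)}$. Let $R_D\subseteq Q$ be the sublattice spanned by the components of $D$ orthogonal to $K_X$, and let $Q_D=R_D^{\perp}\cap Q$ be the symmetry lattice, an affine root lattice of the type complementary to $D$ in Sakai's classification. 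Genericity means that $X$ has no $(-2)$-curves other than the components of $D$.

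\textbf{Step 1: identify the group.} Sakai's theory describes $\Cr(X,D)$, the Cremona isometries preserving $D$, as a semidirect product $W(Q_D)\rtimes \Aut(\text{Dynkin diagram})$. Here $W(Q_D)$ is the affine Weyl group generated by reflections in the simple roots of $Q_D$, and it contains a finite-index translation subgroup isomorphic to a lattice of rank $\operatorname{rk} Q_D - 1$. Genericity is what guarantees that every such reflection is realised by a Cremona isometry: no root of $Q_D$ is the class of an effective $(-2)$-curve, so reflections do not leave the effective cone.

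\textbf{Step 2: describe $\Nef^e(X)$.} For a generic surface the only negative curves are the $(-1)$-curves and the components of $D$. The nef cone is therefore cut out by these, and $\Nef^e(X)$ is the effective part together with rational rays. We consider the convex cone $C$ spanned by the rational points of $\Nef(X)$. On the hyperplane $K_X^\perp$, $C$ contains the ray of $-K_X$ (or of the isotropic class when $D$ is of multiplicative type). Every other nef class has positive self-intersection or lies on an isotropic ray.

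\textbf{Step 3: construct the domain.} We would take the chamber $\Delta$ cut out inside $\Nef(X)$ by the half-spaces $\{x : x\cdot\alpha\ge 0\}$ for the simple roots $\alpha$ of $Q_D$, further cut by the diagram automorphism group. We then verify two things. First, $\Delta$ is rational polyhedral: it should be generated by $-K_X$ together with finitely many classes, namely pullbacks of lines and conics and the fundamental weights lifted to $\operatorname{Pic}(X)$. Second, the translates $g\Delta$ for $g\in \Cr(X,D)$ cover $\Nef^e(X)$ with disjoint interiors. Disjointness follows from simple transitivity of the Weyl group on its chambers. For covering, given $x\in\Nef^e(X)$, the orbit $W(Q_D)x$ is discrete in the projection to $Q_D\otimes\R$ modulo $K_X$, so some element maximises $x\cdot h$ for a fixed ample $h$ in $\Delta$, and that element lies in $\Delta$.

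\textbf{Main obstacle.} The hard part is controlling the boundary near $-K_X$. The translation subgroup acts as parabolic elements fixing the isotropic ray $\R_{\ge0}(-K_X)$, so orbits accumulate there, and a naive maximisation argument fails for classes $x$ with $x^2=0$. We would first handle the elliptic and isotropic rays separately. For a nef class with $x^2=0$ not proportional to $K_X$, Riemann–Roch gives an elliptic or quasi-elliptic pencil, and a finiteness argument on fibration classes modulo $\Cr(X,D)$ reduces these to finitely many orbits. We would then apply Looijenga's theorem, which asks for a polyhedral $\Delta$ whose $\Gamma$-translates cover the interior, and combine it with the fact that $-K_X$ is the unique rational boundary ray fixed by the parabolic subgroup. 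This gives rational polyhedrality of $\Delta$ including at the cusp.
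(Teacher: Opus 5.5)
\textbf{There is a genuine gap: the rational polyhedrality of $\Delta$, which is the whole content of the theorem, is asserted rather than proved.} Your frame is sound: replace $\Cr(X,D)$ by the Weyl group $W(R^\smperp)$, take $\Delta=\Nef(X)\cap\overline{\mathcal F(R^\smperp)}$, get covering from $\Nef(X)\subseteq\mathcal T(R^\smperp)$ (cf.\ Lemma~\ref{lem:nefgen_in_cplus}(d)), and get disjoint interiors from the chamber property. But ``it should be generated by\dots'' is only a guess.

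This is not a formality. $\Nef(X)$ is cut out by the infinitely many classes in $\mathcal M^{\irr}_X$, whose walls accumulate at the ray $\R_{\ge0}\delta$. Moreover $\delta$ lies on every wall of $\overline{\mathcal F(R^\smperp)}$, so intersecting with the chamber does not by itself leave finitely many walls. You need that only finitely many $(-1)$-curves contribute facets of $\Delta$, i.e.\ in effect that $\Cr(X,D)$ has finitely many orbits on $\mathcal M^{\irr}_X$. This is what \cite[Theorems 6.1 and 7.1]{LSX26b} supply in the generic case, and exactly what fails for non-generic $X$ (Theorem~\ref{thm:no_fund_dom_non_generic}).

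Your ``main obstacle'' paragraph does not provide this, for three reasons.
\begin{enumerate}[\normalfont (i)]
\item Covering is not where the difficulty lies: every nonzero nef class has $x\cdot\delta>0$ or lies on $\R_{>0}\delta$, hence lies in the Tits cone.
\item There are no elliptic or quasi-elliptic pencils on a Sakai surface. A fibre class $F$ would have $F^2=F\cdot K_X=0$, hence be proportional to $K_X$ by Hodge index, contradicting $\kappa(X,-K_X)=0$. The isotropic nef classes other than $\delta$ are classes like $\mathcal E_0-\mathcal E_1$ with $x\cdot\delta>0$, and finiteness of their orbits would not give polyhedrality anyway.
\item Looijenga's criterion takes as input a polyhedral cone whose translates cover the interior, which is precisely what has not been produced.
\end{enumerate}

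When $\Cr(X,D)=W(R^\smperp)$, the gap can be closed directly. On the slice $x\cdot\delta=1$, write $x=\mathcal E_9+v+s\delta$, and write each $E\in\mathcal M_X$ as $E=\mathcal E_9+\eta-\frac12\eta^2\delta$, with $v,\eta$ in the negative definite space $V=\delta^\perp\cap\mathcal E_9^\perp$. Setting $|y|^2=-y^2$, one gets $x\cdot E=s-1+\frac12|\eta-v|^2-\frac12|v|^2$. Now $\Pi(R)\cup\Pi(R^\smperp)$ spans $\delta^\perp$, and each of these affine simple systems satisfies $\delta=\sum m_\alpha\alpha$ with all $m_\alpha>0$. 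So on the slice the inequalities $x\cdot\alpha\ge0$ force $0\le x\cdot\alpha\le 1/m_\alpha$, and $v$ ranges over a bounded set. Adding the inequality of one fixed $E_1$ bounds $s$ from below; then every $E$ with $|\eta|$ large is automatically nonnegative, leaving finitely many non-redundant inequalities. Genericity enters exactly here: all reflections of $W(R^\smperp)$ lie in $\Cr(X,D)$, which bounds $v$ in the $R^\smperp$-directions.

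Step 1 also needs correcting.
\begin{enumerate}[\normalfont (i)]
\item $\Cr(X,D)$ fixes each $\mathcal D_i$. So for generic $X$ of the remaining types, \cite[Theorem 26(a)]{Sak01} (as used in Lemma~\ref{lem:wnod_preserves_nef_gen}) gives $\Cr(X,D)=W(R^\smperp)$ itself, with no diagram-automorphism factor; such symmetries permute components of $D$ and lie only in $\Cr(X)$.
\item Genericity does not make every reflection a Cremona isometry. For $R=A_6^\smone$, one factor of $\Pi(R^\smperp)$ consists of classes with $\alpha^2=-14$ (Remark~\ref{rem:the_case_A6}), necessarily primitive. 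Their reflections $x\mapsto x+\frac{x\cdot\alpha}{7}\alpha$ do not preserve the unimodular lattice $N^1(X)$.
\end{enumerate}
The type $A_6^\smone$, together with $A_7^\smone,A_8^\smone,D_7^\smone,D_8^\smone,E_8^\smone$ (which lie outside \cite[Theorem 26(a)]{Sak01}), needs a separate argument.
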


Recall that a Sakai surface is a particular blowup of $\pr^2$ at nine (possibly infinitely near) points, and it has a unique divisor $D \in |{-}K_X|$, see  Definition \ref{dfn:goodHalphen}. Generic Sakai surfaces are those Sakai surfaces which do not have any $({-}2)$-curves disjoint from the support of the divisor $D$; we call these curves \emph{internal $(-2)$-curves}. The group $\Cr(X,D)$ is a subgroup of the group of Cremona isometries of $X$ preserving the classes of the components of $D$, see Definition \ref{dfn:cremona}.

In \cite[Theorem D]{LSX26b} we constructed an example of non-generic Sakai surface with exactly one internal $(-2)$-curve, such that there exists no rational polyhedral fundamental domain for the action of $\Cr(X,D)$ on $\NefE(X)$. In this paper, we show that this example is not a sporadic one. In fact, for {\bf every} non-generic Sakai surface, no version of the Cone Conjecture with respect to a geometric meaningful group holds. This is the main result of the present paper.

\setcounter{thmA}{4}

\begin{thmA}\label{thm:main_thm}
    Let $X$ be a non-generic Sakai surface. Let $G\subseteq\GL\big(N^1(X)_\R\big)$ be any group which preserves the intersection form on $N^1(X)_\R$ and the cone $\Eff(X)$. Then there does not exist a rational polyhedral fundamental domain for the action of $G$ on $\NefE(X)$.
\end{thmA}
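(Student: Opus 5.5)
We plan to reduce the statement to a structural fact about the extremal rays of $\NefE(X)$ combined with a general criterion for the non-existence of rational polyhedral fundamental domains. The group $G$ preserves the intersection form and $\Eff(X)$. It therefore preserves the dual cone $\Nef(X)$ and the class $-K_X$. The latter holds because, $X$ being a Sakai surface, $-K_X$ is the unique primitive nef class $f$ with $f^2=0$ that lies on the boundary in the appropriate sense, and it spans the ray fixed by the whole group of isometries preserving $\Nef(X)$. Hence $G$ acts on $K_X^\perp$ and on the negative definite lattice $K_X^\perp/\langle K_X\rangle$. The first step is to record this, and in particular that $G$ preserves $\R_{\ge0}(-K_X)\subseteq\NefE(X)$.

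The key step uses the hypothesis that $X$ is non-generic. Let $C$ be an internal $(-2)$-curve; then $C\cdot K_X=0$ and $C$ is disjoint from $D$. The face $F=C^\perp\cap\Nef(X)$ is a codimension-one face of $\Nef(X)$ containing $-K_X$. We will show that near $-K_X$ the cone $\NefE(X)$ has infinitely many extremal rays, coming from the orbit of the nef classes under the translation subgroup of the affine Weyl group acting on $K_X^\perp$. Alternatively, we will show that $\NefE(X)$ is not locally rational polyhedral at $-K_X$ along directions in $F$. Concretely, we plan to produce a sequence of nef classes $h_n=h_0+n\,v+\tfrac{n^2}{2}(\dots)(-K_X)$ built from an element $v\in K_X^\perp$ orthogonal to $C$, using the standard Kac-type parabolic translation formula. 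These classes are effective by Riemann--Roch, span distinct extremal rays, and converge projectively to $-K_X$.

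The general criterion, as in \cite{LSX26b} for Theorem D, is the following. Suppose a rational polyhedral fundamental domain $\Pi$ exists. Since $\Pi$ is rational polyhedral, only finitely many faces meet $-K_X$. Since $G$ fixes $-K_X$, the stabiliser of $-K_X$ is all of $G$. We will argue that $G$ must then act on the finite set of faces of $\Pi$ through $-K_X$, so that $\NefE(X)$ near $-K_X$ is covered by $\bigcup_{g}g\Pi$ with the image of $G$ acting on the link of $-K_X$. The orthogonal group of the negative definite lattice $K_X^\perp/K_X$ is finite, so $G$ acts on that link through a group that is finite modulo translations. The translations, however, are exactly those preserving $\Nef(X)$, and the internal $(-2)$-curve $C$ obstructs them: any isometry preserving $\Eff(X)$ must permute the finitely many internal $(-2)$-curves and the classes of components of $D$, so the translation part lies in a lattice $\Lambda$ orthogonal to the roots spanned by internal curves. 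Its rank is strictly smaller than the rank needed to act cocompactly on the cross-section of $\NefE(X)$ near $-K_X$. Comparing ranks, $\NefE(X)/G$ near $-K_X$ then contains a non-compact direction. We will make this precise by exhibiting infinitely many $G$-inequivalent extremal rays $h_n$, using an invariant such as $h_n\cdot C$ or the class of $h_n$ modulo $\Lambda$, which grows. A rational polyhedral $\Pi$ contains only finitely many extremal rays, which gives the contradiction.

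The main obstacle will be the last step: finding a $G$-invariant that separates the $h_n$ for an arbitrary $G$, without knowing $G$ explicitly. We will first try the quantity $\min_{C'}h\cdot C'$ over the finite set of internal $(-2)$-curves, combined with $h\cdot(-K_X)$. Both are $G$-invariant because $G$ permutes internal $(-2)$-curves and fixes $-K_X$.
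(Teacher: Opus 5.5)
Reducing the theorem to exhibiting infinitely many $G$-inequivalent extremal rays of $\NefE(X)$ is sound. The sequence you propose cannot supply them. If $v\in K_X^\perp$ and $v\cdot C=0$, then every summand of $h_n$ other than $h_0$ is orthogonal to both $C$ and $K_X$. So $h_n\cdot C=h_0\cdot C$ and $h_n\cdot(-K_X)=h_0\cdot(-K_X)$ are constant: the invariant $h_n\cdot C$ does not grow, and $\min_{C'}h_n\cdot C'$ stays bounded by $h_0\cdot C$.

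This is structural, not a slip. For $v\in Q(R^\smperp)$ orthogonal to all of $\Delta^{\nod}_{X,D}$, the translation $t_v(x)=x+(x\cdot\delta)v-\bigl(x\cdot v+\tfrac12(x\cdot\delta)v^2\bigr)\delta$ is an integral isometry fixing $\delta$, every $\mathcal D_i$ and every $\gamma_j$. Hence it preserves $\mathcal M_X^{\irr}$ by \cite[Lemma 2.6]{LSX26b}, and therefore $\Eff(X)$; so $t_v\in\Cr(X,D)$. Suppose $h_n=t_{nv}(h_0)$, which is what the parabolic translation formula produces, and $X$ has a single internal $(-2)$-curve, as in \cite[Theorem D]{LSX26b}. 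Then all $h_n$ lie in one $\Cr(X,D)$-orbit, and no invariant separates them.

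Your rank heuristic points the right way: the translations absent from $G$ are those along $Q(\Delta^{\nod}_{X,D})$. But your sequence moves in exactly the directions that $G$ may contain. The paper translates along the missing directions instead. It takes $\mathcal G_n=t_{\xi_n}(\mathcal E_9)$ with $\xi_n\in Q(\Delta^{\nod}_{X,D})$ chosen so that $\mathcal G_n\cdot\gamma_j=\mathcal E_9\cdot\gamma_j+|d|(n-\mathcal E_9\cdot\gamma_j)\to\infty$. This quantity is invariant under the finite-index subgroup of $\Cr(X,D)$ fixing each $\gamma_j$.

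Second, you assert without any mechanism that the $h_n$ are nef and span extremal rays of $\NefE(X)$. On the nef side this is the hard part, and you would still owe it after correcting the direction of translation. The paper avoids it by working on the effective side, where $(-1)$-curves automatically span extremal rays and are canonically normalised by $E\cdot\delta=1$ (Theorem \ref{thm:no_fund_dom_non_generic}). It then transfers the conclusion to $\Nef(X)$ via Looijenga's theory:
an RPFD for $G$ on $\Nef(X)$ gives polyhedral type;
this dualises to $\overline{\Eff}(X)$ by \cite[Proposition-Definition 4.1]{Loo14};
it passes to the supergroup $\Cr(X)\supseteq G$ using \cite[Proposition A.1]{LSX26a};
it then passes to the finite-index subgroup $\Cr(X,D)$ by Lemma \ref{lem:Cr(X,D)_finite_index_in_Cr} and \cite[Proposition 4.6]{Loo14};
the result is an RPFD for $\Cr(X,D)$ on $\Eff(X)$, a contradiction.
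This chain is also what handles an arbitrary $G$, the obstacle you flag at the end.

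Two smaller points. First, $-K_X$ is not the unique primitive isotropic nef class: $\mathcal E_0-\mathcal E_1$ is the class of a base-point-free pencil and has square $0$. The facts that $G$ fixes $K_X$ and preserves $N^1(X)$ should instead be taken from \cite[Proposition A.1]{LSX26a}. Second, the claim that $G$ acts on the faces of $\Pi$ through $-K_X$ makes no sense, since $G$ does not preserve $\Pi$.
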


A main ingredient in the proof of Theorem \ref{thm:main_thm} is the fact, proved in Theorem \ref{thm:no_fund_dom_non_generic} below, that the action of the group $\Cr(X,D)$ on the set of classes of $({-}1)$-curves on $X$ has infinitely many orbits. This behaviour is completely opposite to that in the generic case \cite[Theorems 6.1 and 7.1]{LSX26b}.   

Although no variant of the Cone Conjecture holds for non-generic Sakai surfaces,  we are still able to compare the nef cone of a non-generic Sakai surface $X$ with that of a generic Sakai surface $X_{\gen}$ of the same type. More precisely, using an isomorphism between the N\'eron--Severi spaces of $X$ and $X_{\gen}$, we show in Theorem \ref{thm:nef_fund_domain_for_nefgen} that $\Nef(X)$ is a fundamental domain for the action of the finite Weyl group $W^{\nod}$ on $\Nef(X_{\gen})$, where $W^{\nod}$ is generated by the reflections associated to the internal $(-2)$-curve classes. 

\medskip

\noindent{\sc Comparison with the literature}

\medskip

In the following, we explain why the simultaneous presence of Theorems \ref{thm:main_thm_generic} and \ref{thm:main_thm} is surprising in comparison with other related results. 

In light of the proof of the movable Cone Conjecture for hyperkähler manifolds in \cite[Theorem 6.25]{Mar11}, one might expect that the situation of Sakai surfaces is similar to the case of K3 surfaces, and that Theorem \ref{thm:nef_fund_domain_for_nefgen}
together with \cite[Theorem C]{LSX26b} implies the existence of a rational polyhedral fundamental domain of $\NefE(X)$ under the action of certain group of geometric origin. However, this is not the case: the main reason is that the group $W^{\nod}$ mentioned above is never a normal subgroup of the group of Cremona isometries of $X_{\gen}$, which we show in Lemma \ref{lem:not_normal}. This is contrary to the case of K3 surfaces, see \cite[Corollary 6.9 and Example 6.7]{Mar11}.

Moreover, Theorems \ref{thm:main_thm_generic} and \ref{thm:main_thm} illustrate how the Generalised Cone Conjecture behaves differently under deformation than the classical Morrison--Kawamata Cone Conjecture. 
In higher dimension, \cite[Theorem 1.1]{Lut24} showed that the Cone Conjecture is invariant under deformation for smooth Calabi--Yau threefolds. However, for klt Calabi--Yau generalised pairs, already in dimension two in is necessary to work only with \emph{very general} objects. Indeed, by \cite[Section 5 and Appendix B]{Sak01} and \cite[Proposition B.4]{LSX26a}, a generic Sakai surface is a very general fibre in an analytic family of Sakai surfaces, parametrised by an analytic variety. By Theorem \ref{thm:main_thm_generic}, the Generalised Cone Conjecture holds for such a fibre. Now Theorem \ref{thm:main_thm} clarifies that no meaningful version of the conjecture holds on other fibres of the family. 

\subsection*{Declaration}

All results in this paper were established by the three authors: mathematics is a human endeavour.

\subsection*{Acknowledgements} 

We are deeply indebted to Auguste H\'ebert for continuous help on the theory of root systems and Weyl groups, and for guiding us through the vast literature. We would like to thank C\'ecile Gachet and Wendelin Lutz for many useful discussions and comments over several years on the content of the present paper. We are grateful to Serge Cantat, Pierre-Emmanuel Chaput and Julia Schneider for many valuable discussions. This project started in 2023 when all three authors were based at the Universit\"at des Saarlandes.

Lazi\'c gratefully acknowledges support by the Deutsche Forschungsgemeinschaft (DFG, German Research Foundation) – Project-ID 286237555 – TRR 195 and Project-ID 530132094. The research of Stenger was partly conducted within the framework of the DFG-funded research training group RTG 2965: From Geometry to Numbers, Project number 512730679.
Xie was partly supported by PEPS JCJC de l'Insmi 2025.

\section{Preliminaries}\label{sec:prelim}

Throughout the paper, we work with varieties defined over $\C$. Most of the notation we need in this paper was introduced in \cite{LSX26a,LSX26b}; here we recall the most important properties that we need in this work.

If $X$ is a normal projective variety, we denote by $N^1(X)_\R$, $\Nef(X)$, $\Amp(X)$, $\Eff(X)$ and $\overline{\Eff}(X)$ the N\'eron-Severi space and the nef, ample, effective and pseudoeffective cone of $X$, respectively. If $D$ is an $\R$-Cartier $\R$-divisor on $X$, then we denote its numerical class in $N^1(X)_\R$ by $[D]$. We call the cone $\Nef^e(X)\coloneqq\Nef(X)\cap\Eff(X)$ the \emph{effective nef cone}. The \emph{rational nef cone} $\Nef^+(X)$ is the cone in $N^1(X)_\R$ spanned by the classes of nef line bundles. 

We fix the following notation throughout the paper. 

\begin{notation}\label{not:MXNX}
	Let $X$ be a smooth rational surface with $K_X^2\geq0$, and let $D\in|{-}K_X|$. Let $H$ be an ample $\Q$-divisor on $X$. We define:
	\begin{align*}
       \mathcal C_X^+ & \coloneqq  \{ x\in N^1(X)_\R\mid x^2 >0  \text{ and } x\cdot H >0 \}, \\
		\Comp(D) &\coloneqq \big\{ \cal{D}_i\in N^1(X) \mid \cal{D}_i \text{ is the class of a component of } D\big\},\\
		\cal{M}_X &\coloneqq  \big\{ [E]\in N^1(X) \mid E^2 = K_X\cdot E = -1 \text{ and }h^0(X,E)\geq0 \big\},\\
		\cal{M}_X^{\irr} &\coloneqq \big\{ [E]\in N^1(X) \mid E \text{ is a }(-1)\text{-curve} \big\}, \\
		\mathcal{N}_X &\coloneqq \big\{ E\in \mathcal M_X \mid E\cdot C \geq 0 \text{ for all } C \in \Comp(D) \big\},\\
		\Delta_{X,D}^{\nod}&\coloneqq\big\{[C] \in N^1(X)\mid C\text{ is a $(-2)$-curve with}\ C\cap\Supp D=\emptyset\big\}.
	\end{align*}
The set $\Delta_{X,D}^{\nod}$ is well defined by \cite[Lemma 2.4]{LSX26a}, and a curve whose class is in $\Delta_{X,D}^{\nod}$ is called an \emph{internal $(-2)$-curve}.
\end{notation}

\subsection{Cremona isometries}
In this subsection, we recall the definition of Cremona isometries from \cite[Definition 2.16]{LSX26a}. We restrict to the case of smooth surfaces, see also \cite{Loo81,Dol83}.

\begin{dfn}\label{dfn:cremona}
	Let $X$ be a smooth projective surface. An automorphism $\sigma$ of the vector space $N^1(X)_\R$ is a \emph{Cremona isometry} of $X$ if it satisfies the following properties:
	\begin{enumerate}[\normalfont (i)]
		\item $\sigma$ maps the lattice $N^1(X)$ onto itself,
		\item $\sigma$ preserves the intersection form on $N^1(X)_\R$,
		\item $\sigma$ fixes the class $[K_X]$,
		\item $\sigma$ preserves the effective cone $\Eff(X)$.
	\end{enumerate}
	We denote the group of Cremona isometries of $X$ by $\Cr(X)$. Additionally, if $D$ is an effective divisor on $X$ with irreducible components $D_1,\dots,D_m$, 
    we define $\Cr(X,D)$ as
	\[\Cr(X,D)\coloneqq\big\{\sigma\in\Cr(X)\mid\sigma \big([D_i]\big)=[D_i]\text{ for } i=1,\ldots,m\big\}.\]
\end{dfn}

\subsection{Sakai surfaces}
In this paper, we are mainly interested \emph{Sakai surfaces}. We recall their definition in this subsection.

Let $D = \sum n_iD_i$ be an effective integral divisor on a smooth projective surface $X$, where $D_i$ are prime components. Then $D$ is \emph{of canonical type} if $K_X\cdot D_i = D\cdot D_i = 0$ for all $i$. 

\begin{dfn}\label{dfn:goodHalphen}
		A smooth projective rational surface $X$ is called  a \emph{Sakai surface} if the linear system $|{-}K_X|$ contains a divisor $D$ of canonical type and $\kappa(X,-K_X) = 0$. We call $X$ is \emph{generic} if $\Delta^{\nod}_{X,D} = \emptyset$, and \emph{non-generic} otherwise. 
\end{dfn} 

\begin{lem}\label{lem:Cr(X,D)_preserves_Mirr_internal}
    Let $X$ be a Sakai surface and let $D \in |{-}K_X|$. Then $\Cr(X,D)$ preserves the sets $\Delta_{X,D}^{\nod}$ and  $\mathcal{M}^{\irr}_X$.
\end{lem}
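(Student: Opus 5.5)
We use two standard facts about a smooth projective rational surface $X$: the classes of irreducible curves of negative self-intersection are exactly the classes spanning extremal rays of $\overline{\Eff}(X)$ that have negative square, and each such ray contains exactly one irreducible curve (up to scaling). Since $\sigma\in\Cr(X,D)$ is a lattice isometry preserving $\Eff(X)$, it preserves $\overline{\Eff}(X)$ and permutes its extremal rays. If $C$ is an irreducible curve with $C^2<0$, then $\sigma([C])$ spans an extremal ray of negative square, hence is a positive multiple $\lambda[C']$ of the class of an irreducible curve $C'$. We must show $\lambda=1$.

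\textbf{$\lambda=1$.} The inverse $\sigma^{-1}$ is also a Cremona isometry, so it sends $[C']$ to $\mu[C]$ for some $\mu>0$ with $\lambda\mu=1$. Both $\sigma([C])$ and $\sigma^{-1}([C'])$ are integral classes. Suppose the class of an irreducible curve is primitive, or at least not a positive integer multiple $m\geq2$ of another integral class; this is the case for $(-1)$- and $(-2)$-curves, since $E^2=-1$ and $C^2=-2$ are not divisible by $m^2$ for $m\geq 2$. Then integrality forces $\lambda,\mu\in\Z_{>0}$, so $\lambda=\mu=1$. Concretely, $\sigma$ maps the class of an irreducible curve of self-intersection $-1$ (resp. $-2$) to the class of an irreducible curve $C'$ with the same self-intersection, because $\sigma$ preserves the intersection form.

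\textbf{$(-1)$-curves.} Let $E$ be a $(-1)$-curve, and let $E'$ be the irreducible curve with $\sigma([E])=[E']$. Then $E'^2=-1$. Since $\sigma$ fixes $[K_X]$, we get $K_X\cdot E'=K_X\cdot E=-1$. Adjunction then gives $p_a(E')=0$, so $E'$ is a $(-1)$-curve. Hence $\sigma(\mathcal{M}_X^{\irr})\subseteq\mathcal{M}_X^{\irr}$, and applying the same argument to $\sigma^{-1}$ gives equality.

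\textbf{Internal $(-2)$-curves.} Let $C$ be an internal $(-2)$-curve. By the previous steps $\sigma([C])=[C']$ with $C'$ irreducible, $C'^2=-2$ and $K_X\cdot C'=0$, so $C'$ is a $(-2)$-curve. It remains to show that $C'$ is disjoint from $\Supp D$. For each component $D_i$ we have
\[
C'\cdot D_i=\sigma([C])\cdot\sigma([D_i])=C\cdot D_i=0,
\]
using that $\sigma$ fixes $[D_i]$. There are two cases.
\begin{enumerate}[\normalfont (i)]
\item If $C'$ is not a component of $D$, then $C'\cdot D_i=0$ for every component $D_i$ means $C'$ meets no component, so $C'\cap\Supp D=\emptyset$.
\item If $C'=D_j$ for some $j$, then $[C]=\sigma^{-1}([D_j])=[D_j]$. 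This is impossible: $C\cdot D_j=0$, whereas $D_j\cdot D_j=-2$.
\end{enumerate}
So $\sigma$ maps $\Delta_{X,D}^{\nod}$ into itself, and again $\sigma^{-1}$ gives equality.

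The main obstacle is the first step, namely that extremal rays of $\overline{\Eff}(X)$ with negative square are spanned by irreducible curves. The proof is: write a generator $x$ of such a ray as a limit of effective classes; $x^2<0$ forces some irreducible curve $\Gamma$ with $x\cdot\Gamma<0$, and extremality then gives that $x$ is proportional to $[\Gamma]$. If convenient, this can instead be quoted from \cite{LSX26a}.
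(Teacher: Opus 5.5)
\textbf{Gap in the step $\lambda=1$.} From the integrality of $\sigma([C])=\lambda[C']$ you may conclude $\lambda\in\Z$ only if $[C']$ is primitive. You verified primitivity only for $(-1)$- and $(-2)$-curves. At this stage you know nothing about $C'$ beyond irreducibility and $C'^2=C^2/\lambda^2<0$; that $C'^2=C^2$ is deduced only afterwards. So the argument is circular here.

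What integrality does give is $\mu\in\Z_{>0}$, since $\sigma^{-1}([C'])=\mu[C]$ is integral and $[C]$ is primitive. The repair is short. We have $[C']=\mu\,\sigma([C])$, hence $C'^2=\mu^2C^2$ and $K_X\cdot C'=\mu\,K_X\cdot C$, because $\sigma$ fixes $[K_X]$ and preserves the form. Adjunction $C'^2+K_X\cdot C'=2p_a(C')-2\geq -2$ then gives $\mu^2+\mu\leq 2$ for a $(-1)$-curve and $2\mu^2\leq 2$ for a $(-2)$-curve, so $\mu=1$.

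Alternatively, first observe that on a Sakai surface every irreducible curve of negative square is a $(-1)$- or $(-2)$-curve. Indeed, $K_X\cdot C'=-D\cdot C'\leq 0$ for $C'\not\subseteq\Supp D$, and the components of $D$ are $(-2)$-curves unless $D$ is irreducible with $D^2=0$. With this observation, your primitivity argument applies to $C'$ as written.

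\textbf{Comparison with the paper.} With this fixed, your proof is correct and follows a different route. The paper quotes from \cite{LSX26b} that Cremona isometries permute the $(-2)$-curves. For $(-1)$-curves it only notes $w(E)\in\mathcal{M}_X$ and then applies the numerical criterion \cite[Lemma 2.6]{LSX26b}: an element of $\mathcal{M}_X$ lies in $\mathcal{M}^{\irr}_X$ if and only if it meets every class in $\Comp(D)\cup\Delta^{\nod}_{X,D}$ non-negatively. This is why the paper treats $\Delta^{\nod}_{X,D}$ first.

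Your argument avoids these Sakai-specific inputs. It uses only the extremal-ray structure of $\overline{\Eff}(X)$, integrality and adjunction, so the $(-1)$-curve part works on any smooth projective surface. Your case (ii) also makes explicit a point the paper leaves implicit: the image of an internal $(-2)$-curve class cannot be the class of a component of $D$. The paper's version is shorter, given the results already available in \cite{LSX26b}.
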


\begin{proof}
Fix $w \in \Cr(X,D)$. We first show that $w$ preserves $\Delta^{\nod}_{X,D}$. Let $\gamma = [C] \in \Delta_{X,D}^{\nod}$, where $C$ is an internal $({-}2)$-curve. Recall that $\Delta_{X,D}^{\nod}$ is a finite set by \cite[Lemma 2.7]{LSX26b}.  Then $w$ permutes the $({-}2)$-curves on $X$ (see, e.g.\ \cite[Proof of Theorem D, Step 4]{LSX26b}). Moreover, for each $\cal D_i \in \Comp(D)$, we have 
    $$
    w(\gamma) \cdot \cal D_i  =  w(\gamma) \cdot w(\cal D_i) = \gamma \cdot \cal D_i =0.
    $$
    Thus, $w(\gamma)$ is the class of a $({-}2)$-curve which is disjoint from $\Supp(D)$, which shows $w(\gamma) \in \Delta_{X,D}^{\nod}$ as desired. 
    In particular, there exists an element $\sigma \in S_{|\Delta_{X,D}^{\nod}|}$ such that 
    \begin{equation}\label{eq:permutation_internal_curves}
         w(\gamma_j) = \gamma_{\sigma(j)} \text{ for every } \gamma_j \in\Delta_{X,D}^{\nod}.
    \end{equation}

Now we show that $w$ preserves $\mathcal{M}^{\irr}_X$.
    Let $E \in  \mathcal{M}^{\irr}_X $. Since $w \in \Cr(X,D)$ preserves the intersection pairing and $[-K_X]$, we have $w(E) \in \mathcal{M}_X$.  Moreover,  for each $\cal D_i \in \Comp(D)$, since $w$ fixes $\cal{D}_i$, we have
    $$
    w(E) \cdot \cal D_i = w(E) \cdot w(\cal D_i) = E \cdot \cal D_i \geq 0, 
    $$
 and for each $\gamma_i \in \Delta_{X,D}^{\nod}$, it follows from \eqref{eq:permutation_internal_curves} that there exists $\sigma' \in \mathcal{S}_{|\Delta_{X,D}^{\nod}|}$ such that
  $$
    w(E) \cdot \cal \gamma_i = E \cdot w^{-1}(\gamma_i) = E \cdot \gamma_{\sigma'(i)} \geq 0.
    $$
  Therefore, using \cite[Lemma 2.6]{LSX26b} we conclude that $w(E) \in \mathcal{M}^{\irr}_X$, as desired. 
\end{proof}
\begin{rem}\label{rem:extremal_rays_eff}
    If $X$ is a Sakai surface, then, by \cite[Lemma 5.1(a)(c)]{LSX26b}, $\Eff(X)$ is a closed convex cone spanned by $\mathcal{M}^{\irr}_X \cup \Comp(D) \cup \Delta_{X,D}^{\nod}$. Moreover, there is a bijection between the set of extremal rays spanned by elements in $\cal{M}^{\irr}_X$ and the set $\cal{M}^{\irr}_X$ itself. 
 Indeed, let $[C] \in \mathcal{M}^{\irr}_X$, and suppose that $C'$ is a $({-}1)$-curve such that $[C'] = \lambda [C]$ for some $\lambda > 0$. Then $ -1 = (C')^2 = \lambda^2 C^2 = - \lambda^2 $, hence  $\lambda =1$. Thus, $[C] = [C']$. Note that this also implies $C = C'$ by \cite[Lemma 2.4]{LSX26a}.
 \end{rem}

\subsection{Root system on Sakai surfaces}\label{subsec:tauto_basis} 
In \cite[Section 3]{LSX26b} we discussed in detail the theory of root systems, following \cite{Kac90, Bou08}, and the notion of root data introduced in \cite{MP89}. In this subsection we briefly recall the root systems associated with a Sakai surface and we refer to \cite[Section 4]{LSX26b} for more details. 

Throughout the section, $X$ is a Sakai surface with the unique divisor $D\in|{-}K_X|$. We denote by $\mathcal{D}_i$ the classes of the components of $D$.

Recall from \cite[Lemma 2.9(b)]{LSX26a} that $X$ is the blowup of $\pr^2$ at $9$ (possibly infinitely near) points. Then there exist birational morphisms $\pi_i\colon X_i\to X_{i-1}$ for $i=1,\dots,9$, such that $X_9:=X$ and $X_0:=\pr^2$, and each $\pi_i$ is a contraction of a $(-1)$-curve $E_i\subseteq X_i$. We call the induced morphism $\pi_1\circ\ldots\circ\pi_9\colon X\to\mathbb{P}^2$ a \emph{blowdown structure} on $X$.

Let $\mathcal E_0\in N^1(X)_\R$ be the class of the pullback of a line on $\pr^2$, and let $\mathcal E_i\in N^1(X)_\R$ be the class of the pullback of $E_i$ for any $i\in\{1,\dots,9\}$. We call $\{\mathcal{E}_0,\ldots,\mathcal{E}_9\}$ the  \textit{tautological basis} of $N_1(X)_\R$.
It is easy to check that
\begin{equation}\label{eq:intersectnumbers}
    \mathcal E_0^2=1, \quad \mathcal E_i^2={-}1 \text{ for }i>0,\quad \mathcal E_i\cdot\mathcal E_j= 0 \text{ for }i\neq j,
\end{equation}
and
\begin{equation}\label{eq:anticandiv}
    -K_X \sim 3\mathcal E_0 - \mathcal E_1-\dots-\mathcal E_9.
\end{equation}
Moreover,
\begin{equation}\label{eq:picgroup}
N^1(X) = \Z \mathcal E_0\oplus\dots\oplus \Z \mathcal E_9.
\end{equation}

We consider the following classes in $N^1(X)_\R$:
$$ \rho_8 \coloneqq \mathcal E_0 - \mathcal E_1 - \mathcal E_2 - \mathcal E_3, \quad \rho_i \coloneqq \mathcal E_i-\mathcal E_{i+1}\text{ for }i=1,\dots, 7,\quad \rho_0 \coloneqq \mathcal E_8-\mathcal E_9;$$
Then the set
\begin{equation}\label{eq:Pi_E_8}
    \Pi(E_8^\smone)\coloneqq \{\rho_0,\rho_1,\dots,\rho_8\}\subseteq N^1(X)_\R 
\end{equation} 
is linearly independent in $N^1(X)_\R$. We consider the pairing
\begin{equation}\label{eq:pairing}
\langle\cdot\, ,\cdot\rangle\colon N^1(X)_\R\times N_1(X)_\R\to\R,\quad \langle\alpha,\beta\rangle\mapsto{-}\alpha\cdot \beta,
\end{equation}
and we set 
$$\rho_i^\smvee\coloneqq \rho_i\in N_1(X)_\R\quad\text{for all }i.$$
Set
$$A\coloneqq \big(\langle \rho_i,\rho_j^\smvee\rangle\big)_{0\leq i,j\leq8}.$$
Then $A$ is a generalised Cartan matrix and  $\big(N^1(X)_\R,\Pi(E_8^\smone),\Pi(E_8^\smone)^\smvee\big)$ is its realisation. The Dynkin diagram associated to $A$ is of type $E_8^\smone$, see \cite[p.\ 44]{Kac90}. In particular, $A$ is indecomposable and of affine type. Additionally, if we denote
$$\delta\coloneqq [{-}K_X],$$
then using \eqref{eq:anticandiv}, one can check that $\delta$ generates the set of imaginary roots of $\big(N^1(X)_\R,\Pi(E_8^\smone),\Pi(E_8^\smone)^\smvee\big)$, see \cite[Theorem 3.2(b)]{LSX26b}. Note that
$$ \delta^2=0\quad\text{and}\quad\delta\cdot\rho_i=0\text{ for all }i. $$
We denote by $Q(E_8^\smone)$ the lattice spanned by $\Pi(E_8^\smone)$. By \cite[Propositions 7 and 8]{Sak01} we have $Q(E_8^\smone)=\{x\in N^1(X)\mid x\cdot\delta=0\}$. We denote by $W(E_8^\smone)$ the associated Weyl group and by $\Delta(E_8^\smone)^\re$ the associated set of real roots. Since every element of $Q(E_8^\smone)$ is orthogonal to $\delta$, we have
$$ w(\delta)=\delta\quad\text{for any }w\in W(E_8^\smone). $$
Moreover, for every $w \in W(E_8^\smone)$ and $x,y \in N^1(X)_{\R}$, we have
\begin{equation}\label{eq:preserve_pairing}
w(x) \cdot y = x \cdot w^{-1}(y).
\end{equation}
	  	
We recall from \cite[\S 4.4]{LSX26b} the definition of the \emph{negative fundamental chamber}
        $$ \mathcal F(E_8^\smone) = \big\{x\in N^1(X)_\R \mid x\cdot\beta > 0 \text{ for all }\beta\in \Pi(E_8^\smone)^\smvee\big\} \subseteq N^1(X)_\R, $$
and the \emph{negative Tits cone}
        $$ \mathcal T(E_8^\smone) = \bigcup_{w\in W(E_8^\smone)} w\big(\overline{\mathcal{F}(E_8^\smone)}\big)\subseteq N^1(X)_\R. $$
Moreover,  we have 
\begin{equation}\label{eq:tautological_tits_cone}
\mathcal T(E_8^\smone) = \{x\in N^1(X)_{\R}\mid \delta\cdot x>0 \}\cup\R[\delta].
\end{equation}

We consider special subroot systems of the root system $E_8^\smone$ above, following \cite{Sak01}. The \emph{root system $R$} is a root system with the basis
$$ \Pi(R)\coloneqq \{\mathcal{D}_i\mid \mathcal{D}_i \text{ is the class of a component of } D\}. $$
This is indeed a root system by \cite[p.\ 181]{Sak01}, and it depends on the blowdown structure of $X$. 

The surfaces $X$ are classified in according to the \emph{type $R$} of $D$. All possible subroot systems $R$ are given in \cite[pp.\ 181--184]{Sak01}:
\begin{align*}
&A_0^\smone,\ A_1^\smone,\ A_2^\smone,\ A_3^\smone,\ A_4^\smone,\ A_5^\smone,\ A_6^\smone,\ A_7^\smone,\ {A_7^\smone}',\\
&A_8^\smone,\ D_4^\smone,\ D_5^\smone,\ D_6^\smone,\ D_7^\smone,\ D_8^\smone,\ E_6^\smone,\ E_7^\smone,\ E_8^\smone.
\end{align*}
Note that the four additional items in the Sakai's list, $A_0^\smonestar$, $A_0^\smonestarstar$, $A_1^\smonestar$ and $A_2^\smonestar$, give the same root system $R$ as of type $A_0^\smone$, $A_1^\smone$ and $A_2^\smone$, respectively. Hence they carry no new numerical information; the $*$ refers to different geometric configurations of the components of $D$.

We set $\mathcal{D}_i^\smvee \coloneqq \mathcal{D}_i\in N_1(X)_{\R}$ for each class of a component of $D$, and set
$$ A\coloneqq \big( \langle \mathcal{D}_i,\mathcal{D}_j^\smvee \rangle \big)_{i,j}. $$
By \cite[Lemma 3.11]{LSX26b}, if $R\neq A_0^\smone$, then
$$
\big(A,\Pi(R),\Pi(R)^\smvee,N^1(X)_\R,N_1(X)_\R,\langle\cdot\, ,\cdot\rangle\big)
$$
is a root datum of affine type.

In the remainder of this subsection, we assume that  $ R\notin\{A_8^\smone, D_8^\smone, E_8^\smone \}$.
Consider the lattice
$$Q(R^\smperp) \coloneqq \{v \in N^1(X) \mid v \cdot  \mathcal{D}_i =0 \text{ for all }i \}, $$
and observe that $Q(R)\cap Q(R^\smperp) = \Z \delta$. We denote by $\Pi(R^\smperp)$ the corresponding set as in Appendix \cite[Appendix A]{Sak01}. We consider the pairing $\langle\cdot\, ,\cdot\rangle$ as in \eqref{eq:pairing}.
Then by  \cite[Proposition 4.3(a)]{LSX26b}, there exists a generalised Cartan matrix $A$ such that
\begin{equation}\label{eq:set_root_data_Rperp}
    \big(A,\Pi(R^\smperp),\Pi(R^\smperp)^\smvee,N^1(X)_\R,N_1(X)_\R,\langle\cdot\, ,\cdot\rangle\big)
\end{equation}
is a root datum with associated 
associated Weyl group $W(R^\smperp)$ and real roots
$\Delta(R^\smperp)^{\re}$.
We denote by $\mathcal F(R^\smperp)$and $\mathcal T(R^\smperp)$
its negative fundamental chamber and its negative Tits cone. 

Assume now that
 $R\notin\{A_5^\smone,A_6^\smone, D_6^\smone \}$.
Then $A$
is an indecomposable symmetric generalised Cartan matrix of affine type, and thus \eqref{eq:set_root_data_Rperp} is a root datum of affine type, see \cite[\S 4.7]{LSX26b}.

Finally, assume that 
$ R\in\{A_5^\smone,A_6^\smone, D_6^\smone \}$. Then, by \cite[\S 4.8]{LSX26b}, the matrix $A$ is decomposable and the root datum \eqref{eq:set_root_data_Rperp} decomposes into root data $\mathscr{D}$ and $\mathscr{D}'$ of affine type. In particular, we have 
\begin{equation}\label{eq:decompose_W}
    W(R^\smperp) = W_{\mathscr{D}}W_{\mathscr{D}'}.
\end{equation}

\subsection{Group actions and fundamental domains}

Let $\Lambda$ be a free $\Z$-module of finite rank and let $V\coloneqq \Lambda\otimes_\Z \R$. A \emph{convex cone} in $V$ is a convex subset $\mathcal C\subseteq V$ such that $\lambda v\in\mathcal C$ for every $\lambda\geq0$ and $v\in\mathcal C$. 
It is \emph{strictly convex} if its closure $\overline{\mathcal C}$ contains no lines. 
It is \emph{polyhedral}, respectively \emph{rational polyhedral}, if it is spanned by finitely many elements of $V$, respectively of $\Lambda$.
For a convex cone $\mathcal C\subseteq V$ we denote by $\mathcal \Int(C)$ the interior of $\mathcal C$, and by $\mathcal C^+$ the convex hull of $\overline{\mathcal C}\cap \Lambda$. We say that a subgroup of $G\subseteq\GL(V)$ \emph{preserves $\mathcal C$} if $g(\mathcal C)=\mathcal C$ for each $g\in G$; then we also say that \emph{$G$ acts on $\mathcal C$}. Moreover, if $\Pi\subseteq\mathcal C$, then we denote $G\cdot\Pi\coloneqq\bigcup_{g\in G}g(\Pi)$.

\begin{dfn}\label{dfn:polyhedral_type}
	Let $\mathcal C\subseteq V$ be a strictly convex cone such that $\Int(\mathcal C)\neq\emptyset$, and let $G$ be a subgroup of $\GL(V)$ which preserves $\Lambda$ and $\mathcal C$. 
	We say that the action of $G$ on $\mathcal C$ 
    \begin{enumerate}[\normalfont (a)]
\item  is \emph{of polyhedral type} if there exists a polyhedral cone $\Pi \subseteq\mathcal C^+$ such that $\operatorname{Int}(\mathcal C)\subseteq G \cdot \Pi$;
    \item  admits a \emph{rational polyhedral fundamental domain} if there exists a rational polyhedral cone $\Pi \subseteq \mathcal C$ such that $G \cdot \Pi = \mathcal C$, and if $g\big( \Int(\Pi) \big) \cap \Int(\Pi) \neq\emptyset$ for some $g \in G$, then $g$ is the identity in $G$.
    \end{enumerate}
\end{dfn}

\subsection{An auxiliary lemma} We need the following lemma in the proof of Theorem \ref{thm:no_fund_dom_non_generic}.

\begin{lem}\label{lem:finite_orbits}
    Let $(G,\cdot)$ be a group acting on a set $S$, and let $H\subseteq G$ be a subgroup of finite index. Assume that $G$ acts on $S$ with finitely many orbits. Then the action of $H$ on $S$ has finitely many orbits as well.
\end{lem}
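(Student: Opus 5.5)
The plan is to reduce the statement to the finiteness of the index $[G:H]$ by means of coset representatives. Since $H$ has finite index in $G$, choose finitely many elements $g_1,\dots,g_n\in G$ with
\[ G=\bigsqcup_{j=1}^n Hg_j, \]
so that $G$ is the disjoint union of the right cosets $Hg_j$. By assumption $G$ has finitely many orbits on $S$, so I fix representatives $s_1,\dots,s_m\in S$ satisfying $S=\bigcup_{i=1}^m G\cdot s_i$.

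The key step is to check that every $H$-orbit contains one of the finitely many points $g_j\cdot s_i$. Take any $s\in S$. Then $s=g\cdot s_i$ for some $i$ and some $g\in G$. Writing $g=hg_j$ with $h\in H$ gives
\[ s=h\cdot(g_j\cdot s_i)\in H\cdot(g_j\cdot s_i). \]
It follows that
\[ S=\bigcup_{i=1}^m\bigcup_{j=1}^n H\cdot(g_j\cdot s_i). \]
Hence $H$ has at most $mn$ orbits on $S$, which proves the lemma.

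There is no real obstacle here. The one point that needs care is to use right cosets $Hg_j$ rather than left cosets, so that the element $h\in H$ appears on the left and acts last. The finite index condition guarantees that the same number of right cosets exists, so this choice costs nothing.
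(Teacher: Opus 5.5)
Your proof is correct and matches the paper's argument: choose right coset representatives $g_j$ of $H$ in $G$ and orbit representatives $s_i$, then factor $g=hg_j$ to see that every point lies in some $H\cdot(g_j\cdot s_i)$. This gives at most $[G:H]$ times the number of $G$-orbits many $H$-orbits.
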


\begin{proof}
    By assumption, there exists a finite index set $I$ and elements $g_i\in G$ such that $\{ Hg_i \}_{i\in I}$ are all the right cosets of $H$ in $G$. By assumption, there exists a finite index set $J$ such that there are precisely $|J|$ orbits of $G$ on $S$, and let $\{ s_j \}_{j\in J}$ be elements representing distinct orbits.
    
    Let $s\in S$. Then there exist $j\in J$ and $g\in G$ such that $s= g(s_j)$, and there exist $i\in I$ and $h\in H$ such that $g= hg_i$. Then $ s= (hg_i) s_j = h \big( g_i (s_j)\big )$, thus $s$ belongs to the $H$-orbit of $g_i (s_j)$. Therefore, $H$ has at most $|I|\cdot |J|$ orbits, as desired.
\end{proof}

\section{Nodal Weyl group}\label{sec:nodal_weyl}

Let $X$ be a Sakai surface and let $D$ be the unique element in $|{-}K_X|$. Recall from \cite[Lemma 2.7]{LSX26b} that the classes of $({-}2)$-curves in $N^1(X)_{\R}$ are linearly independent. In particular, $X$ has at most nine $({-}2)$-curves. Therefore, we we either have $ \Delta_{X,D}^{\nod} = \emptyset$, i.e.\ $X$ is generic, or there exist a positive integer $m \leq 9$ and classes $\gamma_1,\dots,\gamma_m\in N^1(X)$ of $(-2)$-curves on $X$ such that 
$$ \Delta_{X,D}^{\nod}= \{ \gamma_1,\dots,\gamma_m \}. $$
We associate a root datum to $\Delta_{X,D}^{\nod}$: for each $i\in \{ 1,\dots,m\}$, we define $ \gamma_i^\smvee \coloneqq \gamma_i \in N_1(X) $ and we set
$$ \Delta_{X,D}^{\nod,\smvee} = \{ \gamma_1^\smvee,\dots,\gamma_m^\smvee \}. $$
Then the associated matrix 
$$ A^{\nod} \coloneqq \big( \langle \gamma_i, \gamma_j^\smvee \rangle \big)_{1\leq i,j\leq m} $$
is clearly a symmetric generalised Cartan matrix.

\begin{lem}\label{lem:nodal_root_data}
    Let $X$ be a non-generic Sakai surface and let $D\in |{-}K_X|$. Then
    $$\big( A^{\nod},\Delta_{X,D}^{\nod},\Delta_{X,D}^{\nod,\smvee}, N^1(X)_\R, N_1(X)_\R, \langle \cdot , \cdot \rangle \big) $$
    is a root datum.
\end{lem}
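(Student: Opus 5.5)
We verify directly the axioms of a root datum in the sense of \cite{MP89} as recalled in \cite[Section 3]{LSX26b}. Besides the obvious requirements (finitely many elements of $N^1(X)_\R$ and $N_1(X)_\R$, a perfect pairing, and $A^{\nod}$ being the matrix of pairings), there are two conditions to check. First, $A^{\nod}$ must be a generalised Cartan matrix. Second, the sets $\Delta_{X,D}^{\nod}$ and $\Delta_{X,D}^{\nod,\smvee}$ must be linearly independent in $N^1(X)_\R$ and $N_1(X)_\R$, respectively.

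For the Cartan matrix condition, the pairing \eqref{eq:pairing} gives $\langle \gamma_i,\gamma_i^\smvee\rangle = -\gamma_i^2 = 2$, since each $\gamma_i$ is the class of a $(-2)$-curve. For $i\neq j$, the curves $C_i\neq C_j$ are distinct irreducible curves, so $\gamma_i\cdot\gamma_j\geq 0$. Hence $\langle\gamma_i,\gamma_j^\smvee\rangle=-\gamma_i\cdot\gamma_j\leq 0$, and it is an integer. The matrix is symmetric, so $a_{ij}=0$ if and only if $a_{ji}=0$. Thus $A^{\nod}$ is a symmetric generalised Cartan matrix, as already noted before the statement.

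For linear independence, we invoke \cite[Lemma 2.7]{LSX26b}, which says that the classes of all $(-2)$-curves on $X$ are linearly independent in $N^1(X)_\R$. The set $\Delta_{X,D}^{\nod}$ is a subset of these classes, so it is linearly independent. Since $\gamma_i^\smvee$ is the same class viewed in $N_1(X)_\R = N^1(X)_\R$ (the surface is smooth), the coroots are linearly independent as well. We then check that the pairing $\langle\cdot,\cdot\rangle$ is perfect. This follows from the nondegeneracy of the intersection form on $N^1(X)_\R$, which is unimodular by \eqref{eq:intersectnumbers} and \eqref{eq:picgroup}. Its restriction to lattices is integral, so the realisation conditions hold.

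The only nonformal step is the linear independence. Here the main care needed is to make sure \cite[Lemma 2.7]{LSX26b} applies to all $(-2)$-curves and not only to components of $D$. If that required an alternative argument, we would note that internal $(-2)$-curves are orthogonal to $\delta$. We would then use negative definiteness of the span of $\Delta_{X,D}^{\nod}$: it lies in $\delta^\perp$, is contained in the negative semidefinite $Q(E_8^\smone)\otimes\R$, and cannot contain $\delta$ since the curves are disjoint from $D$. A Gram matrix with $2$ on the diagonal and nonpositive off-diagonal entries attached to such a negative definite span is then nonsingular, which gives independence.
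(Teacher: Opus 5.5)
Your argument is correct and is essentially the paper's proof: the generalised Cartan matrix condition holds by construction, and linear independence of $\Delta_{X,D}^{\nod}$ (hence of $\Delta_{X,D}^{\nod,\smvee}$) follows from \cite[Lemma 2.7]{LSX26b}, which does apply to all $(-2)$-curves on $X$, so your fallback paragraph is not needed. That fallback would anyway be incomplete as stated, because vectors spanning a negative definite subspace of $\delta^\perp$ can still have a singular Gram matrix of the shape you describe (e.g.\ $\mathcal E_1-\mathcal E_2$, $\mathcal E_2-\mathcal E_3$, $\mathcal E_3-\mathcal E_1$); you would also need that the $\gamma_i$ lie in an open half-space, which holds because $H\cdot\gamma_i>0$ for $H$ ample.
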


\begin{proof}
The condition (i) in \cite[Definition 3.6]{LSX26b} follows from the construction. Since the elements in $\Delta_{X,D}^{\nod}$ are linearly independent, the conditions (ii) and (iii) in \cite[Definition 3.6]{LSX26b} are verified.
\end{proof}

In view of Lemma \ref{lem:nodal_root_data}, we can define the Weyl group
$$ W^{\nod}\coloneqq \langle r_{\gamma_i} \mid \gamma_i \in \Delta_{X,D}^{\nod} \rangle $$
and the set of real roots
$$ R^{\nod} \coloneqq W^{\nod} \cdot \Delta_{X,D}^{\nod}. $$
Note that for any $x\in N^1(X)$ and for any $i\in \{1,\dots,m\}$ we have
$$r_{\gamma_i}(x)= x + (\gamma_i\cdot x) \gamma_i \in N^1(X).$$
Thus, $W^{\nod} \subseteq \Aut \big(N^1(X)\big)$. We define the \emph{negative fundamental chamber of }$\Delta^{\nod}_{X,D}$ as
\begin{align*}
\cal{F}^{\nod} &\coloneqq \{ x\in N^1(X)_{\R} \mid \langle x, \gamma^{\smvee}_i \rangle < 0 \text{ for any }\gamma_i\in \Delta^{\nod}_{X,D}\} \\
&= \{ x\in N^1(X)_{\R} \mid x\cdot \gamma_i  > 0 \text{ for any }\gamma_i\in \Delta^{\nod}_{X,D}\},
\end{align*}
and the associated \emph{Tits cone}
$$\cal{T}^{\nod} \coloneqq W^{\nod} \cdot \overline{\cal F^{\nod}}.$$

Next we show that $R^{\nod}$ is a subroot system of $E_8^\smone$.

\begin{prop}\label{prop:nodel_root_system}
    Let $X$ be a Sakai surface. Then the following holds:
    \begin{enumerate}[\normalfont (a)]
        \item $R^{\nod}$ is a subroot system of $\Delta(E_8^{\smone})^{\re}$,
       \item $A^{\nod}$ is positive definite,
       \item the sets $R^{\nod}$ and $W^{\nod}$ are finite.
    \end{enumerate}
\end{prop}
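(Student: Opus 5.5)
The plan is to prove (a) first, then (b), and to read off (c) from (b) by standard finite root system theory.

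\emph{Part (a).} Each internal curve class $\gamma_i\in\Delta^{\nod}_{X,D}$ satisfies $\gamma_i^2=-2$. By adjunction and $\delta=[-K_X]=[D]$ we also have $\gamma_i\cdot\delta=K_X\cdot\gamma_i=0$. Hence $\gamma_i\in Q(E_8^\smone)=\{x\in N^1(X)\mid x\cdot\delta=0\}$ by the result of Sakai recalled in \S\ref{subsec:tauto_basis}. For the affine root datum $E_8^\smone$ realised on $N^1(X)_\R$ with the pairing $-(\cdot)$, the real roots are exactly the elements of $Q(E_8^\smone)$ of square $-2$ (equivalently $\langle\alpha,\alpha\rangle=2$). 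This holds because the lattice is even and of affine type (Kac, Proposition 5.10), so every $\alpha$ with $\langle\alpha,\alpha\rangle=2$ is real. I would cite \cite[Section 3]{LSX26b} for this characterisation. Thus $\Delta^{\nod}_{X,D}\subseteq\Delta(E_8^\smone)^\re$, and each reflection $r_{\gamma_i}$, given by $x\mapsto x+(\gamma_i\cdot x)\gamma_i$, coincides with the reflection in the $E_8^\smone$ Weyl group along the root $\gamma_i$. Therefore $W^{\nod}\subseteq W(E_8^\smone)$, and since $W(E_8^\smone)$ preserves the real roots, $R^{\nod}=W^{\nod}\cdot\Delta^{\nod}_{X,D}\subseteq\Delta(E_8^\smone)^\re$. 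It is closed under its own reflections by construction, so it is a subroot system.

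\emph{Part (b).} This is the key step. Suppose a nonzero $x=\sum a_i\gamma_i$ with $a_i\in\R$ satisfies $x^2\geq 0$. The $\gamma_i$ span a negative semidefinite sublattice of $\delta^\perp$, because the form on $\delta^\perp$ is negative semidefinite with kernel $\R\delta$. This follows from the Hodge index theorem since $\delta^2=0$ and $\delta\neq0$. So $x^2=0$, and then $x\in\R\delta$. I will show this is impossible using disjointness from $D$. Every component class $\mathcal D_j$ satisfies $\gamma_i\cdot\mathcal D_j=0$, so $x\cdot\mathcal D_j=0$ for all $j$. This is consistent with $x=c\delta$, so orthogonality alone gives no contradiction, and a sharper argument is needed. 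Two options are available. The first: the classes of all $(-2)$-curves, namely the $\gamma_i$ together with the $\mathcal D_j$ when $R\neq A_0^\smone$, are linearly independent by \cite[Lemma 2.7]{LSX26b}. But $\delta=\sum n_j\mathcal D_j$, so $x=c\delta$ with $c\neq0$ gives a nontrivial linear relation between the $\gamma_i$ and the $\mathcal D_j$, a contradiction. The second option, needed in the case $D$ irreducible (type $A_0^\smone$), is to argue that $c\delta=\sum a_i\gamma_i$ is impossible. For instance, intersect with a $(-1)$-curve $E$: we get $E\cdot\delta=1$, while the right-hand side is an integer combination of the $E\cdot\gamma_i$. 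A cleaner route is that $\delta$ would lie in the span of curves disjoint from $D$, whereas $\delta\cdot H>0$ and the Hodge-type argument applied to the negative definite configuration contradict this. I expect this case distinction to be the main obstacle, and I would first try to invoke \cite[Lemma 2.7]{LSX26b} directly in the form ``$\Comp(D)\cup\Delta^{\nod}_{X,D}$ is linearly independent'' when $D$ is reducible. When $D$ is irreducible I would use Zariski's lemma: a divisor supported on curves disjoint from the connected fibre-like curve $D$ cannot be numerically proportional to $D$.

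\emph{Part (c).} Since $A^{\nod}$ is a positive definite symmetric generalised Cartan matrix, it is of finite type (Kac, Theorem 4.3). Hence its Weyl group $W^{\nod}$ is finite and the set of real roots $R^{\nod}$ is finite (Kac, Proposition 4.9). Equivalently, $W^{\nod}$ preserves the lattice spanned by the $\gamma_i$ together with the negative definite form on it, so it embeds in the finite orthogonal group of a definite lattice, and $R^{\nod}$ is contained in the finite set of vectors of square $-2$ in that lattice.
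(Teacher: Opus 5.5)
Parts (a) and (c) are fine. So is your reduction in (b): by the Hodge index theorem, a real combination $x=\sum a_i\gamma_i\neq0$ with $x^2\geq0$ must lie in $\R\delta$. When $D$ is reducible, your appeal to the linear independence of all $(-2)$-curve classes \cite[Lemma 2.7]{LSX26b} correctly excludes this, applied to the $\gamma_i$ together with the $\mathcal D_j$ and using $\delta=\sum n_j\mathcal D_j$.

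The gap is the case of irreducible $D$ (types $A_0^\smone$, $A_0^\smonestar$, $A_0^\smonestarstar$), where none of your three suggestions works.
\begin{enumerate}[\normalfont (i)]
\item Intersecting $\sum a_i\gamma_i=c\delta$ with a $(-1)$-curve $E$ only yields $c=\sum a_i\,(E\cdot\gamma_i)$, which is no contradiction.
\item The ``cleaner route'' presupposes the negative definiteness you are trying to prove.
\item The Zariski-type claim, that a divisor supported on curves disjoint from $D$ cannot be numerically proportional to $D$, is false in general. On a rational elliptic surface with $D$ a smooth fibre, the two components $C_1,C_2$ of an $I_2$-fibre are $(-2)$-curves disjoint from $D$ with $[C_1]+[C_2]=\delta$. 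Their intersection matrix is degenerate, and $E\cdot C_1+E\cdot C_2=1$ for every section $E$.
\end{enumerate}
This example satisfies every hypothesis your irreducible-case argument uses except $\kappa(X,-K_X)=0$. That is precisely the missing ingredient. In the reducible case it is hidden inside \cite[Lemma 2.7]{LSX26b}, but nothing in your argument replaces it when $D$ is irreducible.

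The paper's argument supplies this ingredient uniformly in $R$.
\begin{enumerate}[\normalfont (1)]
\item Pass to an integral relation: take $\mathbf y\neq0$ integral in the kernel of a block of $A^{\nod}$. In your setup you can do the same, since $\{(\mathbf a,c)\mid\sum a_i\gamma_i=c\delta\}$ is cut out by integer equations. Then $E_{\mathbf y}=\sum y_iC_i$ has $E_{\mathbf y}^2=0$, hence $[E_{\mathbf y}]=r\delta$ with $r\in\Z$ by \cite[Lemma 2.2]{LH05}; after a sign change, $r\geq0$.
\item Write $E_{\mathbf y}=E_1-E_2$ with $E_1,E_2$ effective and without common components. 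Then $E_1\sim rD+E_2$. Since $E_1$ shares no component with $rD+E_2$, it is nef.
\item As $E_1\cdot K_X=0$, the Hodge index theorem gives $E_1^2=0$, so $E_1\equiv sD$ for some integer $s\geq0$.
\item Now $\kappa(X,D)=0$ forces $E_1=sD$, hence $E_1=0$ because $E_1$ is supported away from $D$.
\item Then $rD+E_2\sim0$ gives $E_2=0$ and $r=0$, so $E_{\mathbf y}=0$ and $\mathbf y=0$, a contradiction.
\end{enumerate}
With this step in place, your case distinction and the appeal to \cite[Lemma 2.7]{LSX26b} become unnecessary.
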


\begin{proof}
We will use throughout the proof that the numerical and linear equivalence coincide on $X$, since $X$ is a rational surface.

    Part (a) follows immediately from \cite[Definition 3.14]{LSX26b} and Lemma \ref{lem:nodal_root_data} and by \cite[Proposition 18(a)]{Sak01}.

  For (b), let $D\in|{-}K_X|$. After reordering the indices, we may decompose the generalised Cartan matrix $A^{\nod}$ into a direct sum of indecomposable matrices 
  $$A^{\nod} = A_1 \oplus \cdots \oplus A_k.$$
  Fix $j \in \{1,\ldots,k\}$, and let $n_j$ denote the size of $A_j$. Without loss of generality, we may assume that
  $$\Pi_j^{\nod} =  \{\gamma_1,\ldots,\gamma_{n_j} \}\subseteq \Delta_{X,D}^{\nod}$$
  is the set of curve classes defining the matrix $A_j$. It suffices to show that $A_j$ is positive definite.
  
  For each $i\in\{1,\dots,n_j\}$, let $C_i$ be the internal $({-}2)$-curve such that $\gamma_i=[C_i]$. For each $ \mathbf{x} \coloneqq  (x_1,\ldots, x_{n_j})\neq 0 $ set 
  $$E_\mathbf{x} \coloneqq \sum_{i=1}^{n_j} x_iC_i.$$ 
  Since the classes in $\Delta_{X,D}^{\nod}$ are orthogonal to $[K_X]$, we have $[E_\mathbf{x}]\cdot[K_X]=0$. Since $K_X^2=0$ and $[K_X]\neq0$, by the Hodge index theorem we infer that
    \begin{equation}\label{eq:semidef}
  	E_\mathbf{x}^2\leq 0. 
    \end{equation}
	Therefore, $ \mathbf{x} A_j \mathbf{x}^T = - E_\mathbf{x}^2 \geq 0 $, thus the matrix $A_j$ is positive semidefinite.

To prove that $A_j$ is positive definite, it is enough to show that $A_j$ has full rank. Assume otherwise. Then there exists a vector $\mathbf{y} \neq 0$ such that  $A_j \mathbf{y}^T = 0$. Since $A_j$ has integral coefficients, we can choose $\mathbf{y}$ to be a vector with integral entries. Then $E_\mathbf{y}$ is an integral divisor, and $E_\mathbf{y}^2 = {-}\mathbf{y} A_j \mathbf{y}^{\textrm{T}} = 0$. Therefore, by \cite[Lemma 2.2]{LH05} we have
$$[E_\mathbf{y}] = \big[r(-K_X)\big]=[rD]\quad \text{for some integer }r.$$
By replacing $\mathbf{y}$ with ${-}\mathbf{y}$ if necessary, we may assume that $r\geq0$.

Write $E_\mathbf{y}=E_1-E_2$, where $E_1$ and $E_2$ are effective divisors without common components. Then
\begin{equation}\label{eq:E1E2}
E_1\sim rD+E_2.
\end{equation}
Since $E_1$ is supported on the curves $C_1,\ldots, C_{n_j}$, which are disjoint from the components of $D$, the divisor $E_1$ has no common components with $rD+E_2$, hence by \eqref{eq:E1E2} it intersects every curve on $X$ non-negatively. Thus, $E_1$ is nef.
Moreover, it follows from \eqref{eq:semidef} that
$E_1^2\leq0$. Therefore, $E_1^2=0$, hence by \cite[Lemma 2.2]{LH05} we have
$$[E_1] = \big[s(-K_X)\big]=[sD]\quad \text{for some non-negative integer }s.$$
As $\kappa(X,D)=0$, this implies that $E_1=sD$, hence $E_1=0$ since $E_1$ and $D$ have no common components by above. Then $rD+E_2\sim0$ by \eqref{eq:E1E2}, thus $E_2=0$. Therefore, $E_\mathbf{y}=0$, which is a contradiction that shows (b).

Finally, to show (c), let $V_j\subseteq N^1(X)_{\R}$ be the vector subspace spanned by $\gamma_1,\ldots,\gamma_{n_j}$. Then by \cite[Lemma 4.5]{Kac90}, each $(V_j, \Pi_j^{\nod}, \Pi_j^{\nod,\smvee})$ is a realisation of an indecomposable generalised Cartan matrix of finite type. Thus by \cite[Proposition 4.9]{Kac90}, the associated root system $R^{\nod}_j$ and the Weyl group $W^{\nod}_j$ are finite. This, together with \cite[Chapitre VI, \S 1, Définition 1 and Proposition 5]{Bou08}, implies that $R^{\nod}=\bigcup_{j=1}^k R_j^{\nod}$ and $W^{\nod}=\prod_{j=1}^k W_j^{\nod}$ are finite, which proves (c).
\end{proof}

Since $\gamma_i \cdot \mathcal D_j =0$ for every $\gamma_i \in \Delta_{X,D}^{\nod}$ and every $\mathcal{D}_j\in \Comp (D)$, we have $w(\mathcal{D}_j) =\mathcal{D}_j$ for all $w \in W^{\nod}$. In particular, 
$$ w(\gamma_i)\cdot \mathcal{D}_j = \gamma_i \cdot w^{-1}(\mathcal{D}_j) = \gamma_i \cdot \mathcal{D}_j = 0, $$
where the first equality follows from Proposition \ref{prop:nodel_root_system}(a) and \eqref{eq:preserve_pairing}. Consequently, 
\begin{equation}\label{eq:R^nod_in_orth_lattice}
  \Delta^{\nod}_{X,D} \subseteq R^{\nod}\subseteq Q(R)^{\smperp}=Q(R^{\smperp}).
\end{equation}

Moreover, since $Q(R) \cap Q(R^\smperp) = \Z \delta$ by \cite[\S 4.6]{LSX26b} and since $\Z\delta$ is not contained in 
$Q(\Delta^{\nod}_{X,D})$ by the proof of Proposition \ref{prop:nodel_root_system}, we obtain
\begin{equation}\label{eq:direct_sum}
  Q(\Delta^{\nod}_{X,D}) \cap Q(R) = 0. 
\end{equation}

\begin{cor}\label{cor:generic_types}
    Let $X$ be a Sakai surface whose type $R$ is one of $A_7^\smone$, $A_8^\smone$, $D_7^\smone$, $D_8^\smone$ or $E_8^\smone $. 
    Then 
    $X$ is a generic Sakai surface. 
\end{cor}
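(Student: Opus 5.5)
The plan is to show that in each of these five types the orthogonal lattice $Q(R^\smperp)$ contains no vector of square $-2$. Then \eqref{eq:R^nod_in_orth_lattice} forces $\Delta^{\nod}_{X,D}=\emptyset$. Every $\gamma\in\Delta_{X,D}^{\nod}$ is the class of a $({-}2)$-curve, so $\gamma^2=-2$, and $\gamma$ lies in $Q(R^\smperp)$ by \eqref{eq:R^nod_in_orth_lattice}. It therefore suffices to prove that $Q(R^\smperp)$ contains no element of self-intersection $-2$.

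For $R\in\{A_8^\smone,D_8^\smone,E_8^\smone\}$, the lattice $Q(R)$ has rank $9$. Since $Q(R)\subseteq Q(E_8^\smone)=\delta^\perp$ and $Q(E_8^\smone)$ also has rank $9$, the orthogonal complement of $Q(R)$ inside $N^1(X)$ has rank $1$. It contains $\delta$, because $\delta\cdot\mathcal D_i=0$. Hence $Q(R^\smperp)\otimes\Q=\Q\delta$. Every element of $Q(R^\smperp)$ is therefore a multiple of $\delta$ and has square $0$, so no $({-}2)$-class exists.

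For $R\in\{A_7^\smone,D_7^\smone\}$ (and ${A_7^\smone}'$ if one wants it too), the rank of $Q(R)$ is $8$. So $Q(R^\smperp)$ has rank $2$, and inside $\delta^\perp$ it is spanned over $\Q$ by $\delta$ and one further vector. From Sakai's Appendix A (the list $\Pi(R^\smperp)$), the orthogonal lattice for $A_7^\smone$ and $D_7^\smone$ is of type $A_1^\smone$ with the nonstandard normalisation in which its basis vectors $\alpha_0,\alpha_1$ satisfy $\alpha_i^2=-8$ (for $A_7^\smone$) and $\alpha_i^2=-4$ (for $D_7^\smone$), with $\alpha_0+\alpha_1=\delta$. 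The task is then to check that $Q(R^\smperp)$ is exactly $\Z\alpha_1\oplus\Z\delta$, with no finer saturation.

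Granting this, a general element $a\alpha_1+b\delta$ has square $a^2\alpha_1^2\in\{-8a^2,-4a^2\}$, which is never $-2$, and this finishes these cases. The main obstacle is the saturation check. I would do it with an explicit blowdown structure. For instance, for $D_7^\smone$ take Sakai's realisation of the components $\mathcal D_i$ in the tautological basis and solve the linear equations $v\cdot\mathcal D_i=0$ over $\Z$ directly. This computes $Q(R^\smperp)$ and its quadratic form explicitly. It also avoids having to rely on the precise normalisation recorded in Sakai's table, provided one trusts the realisation.

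An alternative route, which is also sufficient, is to use \eqref{eq:direct_sum}. A $({-}2)$-root in $Q(R^\smperp)$ would give a rank-$1$ sublattice of $Q(E_8^\smone)$ meeting $Q(R)$ trivially and orthogonal to it. Adding its span to $Q(R)$ would produce an affine root subsystem of $E_8^\smone$ of type $R+A_1$, of total rank $8$ (respectively $9$). One would then rule this out using the classification of maximal subsystems in Sakai's list, since $A_7^\smone+A_1$ and $D_7^\smone+A_1$ are not among the realised types.
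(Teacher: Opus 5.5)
Your main route is correct, and it differs from the paper's. The paper argues by classification. Since $\Delta(R)^\re\cup R^{\nod}$ is a decomposable subroot system of $\Delta(E_8^\smone)^\re$, the lattice $Q(R)\oplus Q(\Delta^{\nod}_{X,D})$ sits inside $Q(E_8^\smone)$. The Oguiso--Shioda table \cite[Theorem 3.2]{OS91} then shows that no $Q(R)\oplus Q(R')$ with $R'\neq\emptyset$ occurs for the five listed types, with a footnote separating the two embeddings of $A_7$.

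You instead compute $Q(R^\smperp)$ directly. For $A_8^\smone,D_8^\smone,E_8^\smone$ your rank argument is complete and needs no classification. For $A_7^\smone$ and $D_7^\smone$ your reduction is also right: $Q(R^\smperp)/\Z\delta$ is the orthogonal complement in $E_8$ of the finite part of $Q(R)$, a rank-one lattice, so only the norm of its generator matters. Your approach is more elementary and shows why these types are generic; the paper's is shorter but rests on an external table.

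You have, however, put the saturation check in the wrong place.
\begin{itemize}
\item For $D_7^\smone$ the check is unnecessary. Any $v\in Q(R^\smperp)$ is $a\alpha_1+b\delta$ with $a,b\in\Q$, and $-4a^2=-2$ has no rational solution.
\item The check matters only for $A_7^\smone$, where $a=\pm\frac12$ would produce a $(-2)$-class. There it says exactly that the finite $A_7$ is primitive in $E_8$, with complement of norm $8$, rather than saturating to $E_7$, with complement of norm $2$. So your proposed explicit computation should be carried out for $A_7^\smone$, not $D_7^\smone$.
\end{itemize}

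This is also why your parenthetical ``and ${A_7^\smone}'$ if one wants it too'' is false as regards the conclusion. For ${A_7^\smone}'$ the complement is a copy of $A_1$, so $Q(R^\smperp)$ contains classes of square $-2$. The statement deliberately omits this type.

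For the same reason, your alternative route fails as phrased. $A_7\oplus A_1$ does embed in $E_8$ (inside $E_7\oplus A_1$), so ``$A_7^\smone+A_1$ is not realised'' is not true at the level of root lattices. You must use that Sakai's $A_7^\smone$ is the primitive embedding, which is precisely the point of the paper's footnote. Moreover, Sakai's list classifies types of $D$, not embeddings of $Q(R)\oplus Q(R')$, so it is not the right reference for that step; the paper uses \cite{OS91}.
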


\begin{proof}
We have a decomposable set of real roots 
$$\Delta(R)^{\re} \cup R^{\nod} \subseteq \Delta(E^{\smone}_8)^{\re}.$$
Since $\Delta(R)^{\re}$ is a subroot system of $\Delta(E^{\smone}_8)^{\re}$, together with Proposition \ref{prop:nodel_root_system}(a) we infer that $\Delta(R)^{\re} \cup R^{\nod}$ is a subroot system of $\Delta(E^{\smone}_8)^{\re}$. In particular, by \eqref{eq:direct_sum}, the lattice $Q(R) \oplus Q(\Delta^{\nod}_{X,D})$  is a sublattice of $Q(E^{\smone}_8)$.

 If $R$ is as in the statement, we infer from \cite[Theorem 3.2]{OS91}
 that there is no root lattice of the form $Q(R \oplus R') = Q(R)
 \oplus Q(R')$ which occurs as sublattice of $Q(E_8)$\footnote{There
   are two possible embeddings of the root lattice $A_7^{\smone}$ in
   $E_8^{\smone}$: the first one corresponds to a primitive embedding,
   while the second one gives an embedding of $A_7^{\smone}\oplus A_1$
   in $E_8^{\smone}$. In the notation from \cite{Sak01} this second
   one is denoted by ${A_7^\smone}'$. Note that in \cite[Theorem
   3.2]{OS91}, each root system in their table is to be interpreted as
   either of finite or of affine type.}. Hence, the previous paragraph implies $\Delta^{\nod}_{X,D} = \emptyset$, which proves the statement. 
\end{proof}

When $X$ is a non-generic Sakai surface of type $R$, we show below that $R^{\nod}$ is actually a subroot system of $\Delta(R^\smperp)^{\re}$.

\begin{lem}\label{lem:nodal_roots} 
   Let $X$ be a non-generic Sakai surface of type $R$ and let $D\in |{-}K_X|$. 
Then:
    \begin{enumerate}[\normalfont (a)]
        \item $\Delta_{X,D}^{\nod}\subseteq \Delta(R^{\smperp})^\re_+$, and in particular, $R^{\nod}$ is a subroot system of $ \Delta(R^\smperp)^{\re}$,
        \item $\overline{\mathcal F(R^\smperp)}\subseteq \overline{\mathcal F^{\nod}}$, 
        \item $\cal{T}(R^{\smperp})\subseteq \cal{T}^{\nod}$.
    \end{enumerate}
\end{lem}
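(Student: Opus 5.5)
Part (a) is the key step; (b) and (c) follow from it by formal arguments.

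For (a), fix $\gamma\in\Delta_{X,D}^{\nod}$. By \eqref{eq:R^nod_in_orth_lattice} we have $\gamma\in Q(R^\smperp)$. Moreover $\gamma^2=-2$, since $\gamma$ is the class of a $(-2)$-curve. I would then invoke the description of real roots of the root datum \eqref{eq:set_root_data_Rperp} from \cite[\S 4.7, \S 4.8]{LSX26b} and \cite[Appendix A]{Sak01}. It says that $\Delta(R^\smperp)^\re$ consists exactly of the elements $v\in Q(R^\smperp)$ with $v^2=-2$, and that the positive ones are those that are effective, equivalently those lying in the cone spanned by $\Pi(R^\smperp)$. Since $\gamma$ is effective, this would give $\gamma\in\Delta(R^\smperp)^\re_+$.

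If only the description $\Delta(R^\smperp)^\re=\{v\in Q(R^\smperp)\mid v^2=-2\}$ is available, I would argue positivity as follows. Take an ample class $H$. A real root $v$ is positive iff $v$ pairs with the correct sign against an element of the open chamber $\mathcal F(R^\smperp)$. I would use the standard fact for a fixed choice of basis: an effective root of square $-2$ cannot be a negative root, because a negative root is minus a nonnegative combination of the basis elements of $\Pi(R^\smperp)$, and those basis elements are effective (\cite{Sak01}). Then $-\gamma$ would be effective, contradicting $\kappa$ considerations, or simply the fact that $\gamma\cdot H>0$ while $-\gamma\cdot H\le 0$ would both hold. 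This sign bookkeeping is the step I expect to be the main obstacle.

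Once $\Delta^{\nod}_{X,D}\subseteq\Delta(R^\smperp)^\re$, the rest of (a) is formal. Each $r_{\gamma_i}$ lies in $W(R^\smperp)$, and $W(R^\smperp)$ preserves $\Delta(R^\smperp)^\re$. Hence $R^{\nod}=W^{\nod}\cdot\Delta^{\nod}_{X,D}\subseteq\Delta(R^\smperp)^\re$. This set is closed under its own reflections, so it is a subroot system.

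For (b), let $x\in\overline{\mathcal F(R^\smperp)}$, so $x\cdot\beta\ge0$ for all $\beta\in\Pi(R^\smperp)$. By (a), each $\gamma_i$ is a nonnegative integral combination of elements of $\Pi(R^\smperp)$. Hence $x\cdot\gamma_i\ge0$, that is, $x\in\overline{\mathcal F^{\nod}}$.

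For (c), I would use that $W(R^\smperp)$ is generated by the simple reflections and that the Tits cone $\mathcal T(R^\smperp)$ is the union of $w\big(\overline{\mathcal F(R^\smperp)}\big)$. It is convex and is characterised as the set of $x$ such that $\{\alpha\in\Delta(R^\smperp)^\re_+\mid x\cdot\alpha<0\}$ is finite (\cite[Proposition 3.12]{Kac90}). Now $\mathcal T^{\nod}$ is the Tits cone of a finite Weyl group, by Proposition~\ref{prop:nodel_root_system}(c), so $\mathcal T^{\nod}=N^1(X)_\R$. The inclusion $\mathcal T(R^\smperp)\subseteq\mathcal T^{\nod}$ is then trivial. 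As a check against the convention, I would confirm via \cite[Proposition 3.12]{Kac90} that a finite Weyl group has Tits cone equal to the whole space.
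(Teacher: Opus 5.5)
Your proof of (a) has a genuine gap in the positivity step, and a smaller one in the real-root step; (b) and (c) are fine, and your (c) is simpler than the paper's.

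\textbf{Main gap: positivity of $\gamma$.} Both versions of your argument rest on the claim that the simple roots in $\Pi(R^\smperp)$ are effective, so that positive real roots of $R^\smperp$ are effective classes. This is false. Since $\delta$ is nef, an effective class orthogonal to $\delta$ is supported on components of $D$ and internal $(-2)$-curves (adjunction plus Hodge index). Hence on a generic Sakai surface every effective element of $Q(R^\smperp)$ lies in $Q(R)\cap Q(R^\smperp)=\Z\delta$, so no real root of $R^\smperp$ is effective there. Concretely, for $R=A_0^\smone$ one has $\Pi(R^\smperp)=\Pi(E_8^\smone)$. Then $\mathcal E_i-\mathcal E_{i+1}$, resp.\ $\mathcal E_0-\mathcal E_1-\mathcal E_2-\mathcal E_3$, is effective only when the blown-up points are infinitely near, resp.\ collinear.

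The ample-class variant does not repair this. An ample $H$ need not lie in $\mathcal F(R^\smperp)$, and ``$\gamma\cdot H>0$, $-\gamma\cdot H\le 0$'' is not a contradiction. What you need is a linear form that is nonnegative on $\Pi(R^\smperp)$ and positive on $\gamma$. The paper uses $\mult_{\mathcal E_0}$:
\begin{itemize}
\item by inspection of the explicit bases, $\mult_{\mathcal E_0}\alpha\ge 0$ for all $\alpha\in\Pi(R^\smperp)$;
\item if the internal curve is not contracted to a point of $\pr^2$, then $\mult_{\mathcal E_0}\gamma=\gamma\cdot\mathcal E_0>0$;
\item if it is contracted, only simple roots with $\mult_{\mathcal E_0}=0$ can occur, and one checks directly (effectivity, extremality of $\R_{\ge0}\gamma$ in $\overline{\Eff}(X)$, and pushforward to an intermediate blowup) that $\gamma=\mathcal E_i-\mathcal E_j$ with $i<j$, which is positive.
\end{itemize}

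\textbf{Smaller gap: $\gamma$ is a real root.} The description $\Delta(R^\smperp)^\re=\{v\in Q(R^\smperp)\mid v^2=-2\}$ is wrong as stated. For example, for $R=A_6^\smone$ one factor has real roots of square $-14$ (Remark \ref{rem:the_case_A6}). You only need that $\gamma$, of square $-2$, is a real root.
\begin{itemize}
\item When $R^\smperp$ is indecomposable, this is \cite[Lemma 3.3]{LSX26b}. Corollary \ref{cor:generic_types} is needed here to guarantee that the simple roots have square $-2$.
\item In the decomposable cases $A_5^\smone$, $A_6^\smone$, $D_6^\smone$ you must argue. Write $\gamma=\beta+\beta'$ along the two factors and get $\beta^2,\beta'^2\le 0$ from the Hodge index theorem. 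It follows that one summand has square $0$, hence lies in $\Z\delta$, and the other is a real root of its factor.
\end{itemize}

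\textbf{The rest.} Part (b) follows from (a) exactly as you say, as does the subroot-system statement. Your argument for (c) is correct and simpler than the paper's deduction from (a) and \cite[Corollary 3.15]{LSX26b}; it does not use (a) at all. Since $W^{\nod}$ is finite, $\mathcal T^{\nod}=N^1(X)_\R$. To see this, pick a linear form $\phi$ with $\phi(\gamma_i)=1$ for all $i$ and minimise it over the finite orbit $W^{\nod}x$. If $y\cdot\gamma_i<0$ for some $i$, then $\phi(r_{\gamma_i}(y))<\phi(y)$, so the minimiser lies in $\overline{\mathcal F^{\nod}}$. Give this argument directly rather than citing \cite[Proposition 3.12]{Kac90}, which is stated for realisations in Kac's sense.
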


\begin{proof}
Part (b) follows immediately from part (a) and from \cite[Lemma 3.16]{LSX26b}, while part (c) follows from part (a) and \cite[Corollary 3.15]{LSX26b}. We prove part (a) in the following steps.

Throughout the proof we use the following notation: for an element $\gamma\in Q(E_8^\smone)$, if we write $\gamma = \sum_{i=0}^9 a_i\mathcal{E}_i$ with $a_i \in \Z$, then $\mult_{\mathcal E_i}\gamma:=a_i$ for $i=0,\dots,9$. By \cite[Appendix C]{LSX26b} we see that
\begin{equation}\label{eq:Rperp_mult_E0}
\text{for every }\alpha \in\Pi(R^\smperp)\text{ we have }\mult_{\mathcal E_0}\alpha\geq0.
\end{equation}

\medskip

\emph{Step 1.}
In this step we show that $\Delta_{X,D}^{\nod}\subseteq \Delta(R^{\smperp})^\re$. 

Fix $\gamma\in \Delta_{X,D}^{\nod}$. Then $\gamma \in Q(R^\smperp)$ by \eqref{eq:R^nod_in_orth_lattice}.
Since $\Delta^{\nod}_{X,D}\neq \emptyset$, by Corollary \ref{cor:generic_types} we have $R   \not\in\{ A_7^\smone, A_8^\smone, D_7^\smone, D_8^\smone, E_8^\smone \} $.
Recall from \cite[\S 3.6 and \S 4.2]{LSX26b} that the square length of an element $\beta \in N^1(X)_{\R}$ is $|\beta|^2 = - \beta^2$.

Assume first that $R^\smperp$ is indecomposable. Then it follows from \cite[\S 4.7]{LSX26b} that the root datum associated to $R^\smperp$ is of affine type and
$$ \gamma^2 = -2 = \max\{\alpha^2 \mid \alpha \in \Pi(R^\smperp) \}. $$
Therefore, $\gamma \in \Delta(R^\smperp)^{\re}$ by \cite[Lemma 3.3]{LSX26b}.

For the remainder of this step we assume that $R^\smperp$ is decomposable. Recall from \cite[\S 4.8]{LSX26b} that we have $R\in \{ A_5^\smone, A_6^\smone, D_6^\smone \}$, $\Pi(R^\smperp) = \Pi \cup  \Pi' $, and that the associated root data $\mathscr{D}$ and $\mathscr{D}'$ are of affine type whose sets of imaginary roots are generated by $\delta$. Let $a \coloneqq \max\{\alpha^2 \mid \alpha \in Q(\Pi) \text{ and } \alpha^2 < 0 \} $ and let $a' \coloneqq \max\{(\alpha')^2 \mid \alpha' \in Q(\Pi') \text{ and } (\alpha')^2<0 \}$, where $Q(\Pi)$ and $Q(\Pi')$ are the lattices spanned by $\Pi$ and by $\Pi'$, respectively. 
We may write
    $$ \gamma = \beta + \beta' \quad\text{with }\beta \in Q(\Pi) \text{ and }\beta'\in Q(\Pi'). $$
    Then
    \begin{equation}\label{eq:norm}
         \beta^2 + \beta'^2 = \gamma^2 = -2 = \max\{ a,a' \}, 
    \end{equation} 
where the last equality follows from \cite[Proposition 5.10(a)] {Kac90} and \cite[Appendix C]{LSX26b}. 

Since $K_X^2 = 0$ and $[K_X] \neq 0$,  the Hodge index theorem implies that $\beta^2\leq 0$ and $\beta'^2\leq 0$. Since $\beta\in Q(\Pi)$ and $\beta'\in Q(\Pi')$, we obtain 
   $$ \beta^2 \leq a \leq -2 \quad\text{or}\quad \beta^2 =0 $$
   and
   $$ \beta'^2 \leq a' \leq -2 \quad\text{or}\quad \beta'^2 =0. $$
   Therefore, by \eqref{eq:norm} there exists a unique element in $\{\beta,\beta'\}$ whose self-intersection number is $-2$, and the other element must have self-intersection zero.
    We may assume that  $\beta^2=0$ and $\beta'^2=-2$, the other case is analogous. By \cite[Lemma 2.2]{LH05} we obtain $\beta\in \Z\delta$. Since $\mathscr{D}'$ is a root datum of affine type, it follows from Lemma \cite[Lemma 3.3]{LSX26b} that $\beta'\in \Delta^{\re}_{\mathscr{D}'}$.
    Consequently, $\gamma = \beta' + m\delta \in Q(\Pi') $ for some $m \in \Z$. Applying \cite[Lemma 3.3]{LSX26b} again, we conclude that $\gamma\in \Delta^{\re}_{\mathscr{D}'}\subseteq \Delta(R^{\smperp})^{\re}$.
     \medskip   
     
     \emph{Step 2.}
Let $\gamma\in \Delta_{X,D}^{\nod}$. By Step 1 we have $\gamma\in\Delta(R^\smperp)^\re$. 
Since every root is either positive or negative, there exists a subset $\mathcal S\subseteq\Pi(R^\smperp)$ such that
\begin{equation}\label{eq:coeff_gamma_1}
    \gamma = \sum_{\alpha\in \mathcal S} n_\alpha \alpha,
\end{equation}
where $n_\alpha\in \Z$ are either all positive or all negative. 
In Steps 3 and 4 below we will show that there exists an $\alpha\in\mathcal S$ such that $n_\alpha>0$, which will thus prove that $\gamma\in\Delta(R^\smperp)^\re_+$, as desired.

\medskip

\emph{Step 3.} Assume in this step that $\gamma$ is exceptional over $\pr^2$. Since $\cal{E}_0$ is not exceptional over $\pr^2$, by \eqref{eq:Rperp_mult_E0} we have that $\mult_{\cal{E}_0}\alpha = 0$ for every $\alpha\in \cal{S}$.
Thus 
$$
\gamma = \sum_{i=1}^9 a_i \cal{E}_i.
$$
   Since $\gamma^2 = -2$, by \eqref{eq:intersectnumbers} we obtain
   $\gamma \in \{ \cal{E}_i + \cal{E}_j, -\cal{E}_i - \cal{E}_j,
   \cal{E}_i - \cal{E}_j, -\cal{E}_i + \cal{E}_j \} $ for some $1\leq
   i<j\leq9 $. We claim that $\gamma = \cal{E}_i -
   \cal{E}_j$. Indeed, $\gamma \neq -\cal{E}_i - \cal{E}_j$ since
   $\gamma$ is effective, and  $\gamma \neq \cal{E}_i + \cal{E}_j$
   since the ray $\R_{\geq0}\gamma$ is an extremal ray of
   $\overline{\Eff}(X)$ by \cite[Lemma 2.4]{LSX26a}. Now assume that
   $\gamma = -\cal{E}_i + \cal{E}_j$. Recall that there exist blowups
   $\pi_i\colon X_i\to X_{i-1}$ for $i=1,\dots,9$, such that $X_9:=X$,
   $X_0:=\pr^2$, and $\pi_1\circ\ldots\circ\pi_9\colon
   X\to\mathbb{P}^2$. Set
   $\varphi:=\pi_{i+1}\circ\ldots\circ\pi_9$. Then
   $\varphi_*\gamma=-\varphi_*\cal E_i$, a contradiction since both
   $\varphi_*\gamma$ and $\varphi_*\cal E_i$ are effective classes.

From the definition of $\Pi(E^\smone_8)$ in \eqref{eq:Pi_E_8}, we obtain the following. If
$j = 9$, then 
$$
\gamma = \mathcal{E}_i - \mathcal{E}_9 = 
\sum_{k=i}^7 (\mathcal{E}_k - \mathcal{E}_{k+1})  + (\mathcal{E}_8 - \mathcal{E}_{9}) \in \Delta(E^{\smone}_8)^\re_+,
$$
If $2\leq j \leq 8$, then 
$$
\gamma = \mathcal{E}_i - \mathcal{E}_j = 
\sum_{k=i}^{j-1} (\mathcal{E}_k - \mathcal{E}_{k+1})  \in \Delta(E^{\smone}_8)^\re_+.
$$

\medskip

\emph{Step 4.}
Assume in this step that $\gamma$ is not exceptional over $\mathbb P^2$. Then, since $\mathcal E_0$ is the pullback of the hyperplane class on $\mathbb P^2$, by the projection formula we obtain that
$$ \mult_{\mathcal E_0}\gamma=\gamma\cdot\mathcal E_0>0. $$
Together with \eqref{eq:Rperp_mult_E0} this shows that there exists $\alpha\in \mathcal S$ such that $n_\alpha >0$, which concludes the proof.
\end{proof}
\begin{rem}\label{rem:the_case_A6}
    If $X$ is a Sakai surface of type $R = A^\smone_6$, then $\Pi(R^\smperp) = \Pi \cup \Pi'$ with associated root data $\mathscr{D}$ and $\mathscr{D'}$ of affine type. Moreover, by  \cite[\S 4.8 and Appendix C]{LSX26b}, we have $\alpha^2 =-14$ for every 
    $\alpha\in \Pi$, and $(\alpha')^2 = -2$ for every $\alpha' \in \Pi'$. By Step 1(b) of the proof of Lemma \ref{lem:nodal_roots}, we have $\Delta^{\nod}_{X,D} \subseteq \Delta^{\re}_{\mathscr{D}'}$, and thus, 
   $W^{\nod} \subseteq W_{\mathscr{D}'}$.
\end{rem}

\section{Infinitely many orbits}

In \cite[Theorems 6.1 and 7.1]{LSX26b} we showed that for a  generic Sakai surface $X$, the action of $\Cr(X,D)$ on the set of classes of $({-}1)$-curves has only finitely many orbits. Surprisingly, the situation is completely reversed in the non-generic case: 

\begin{thm}\label{thm:no_fund_dom_non_generic}
    Let $X$ be a non-generic Sakai surface, and let $D \in |{-}K_X|$. Then:
   \begin{enumerate}[\normalfont (a)]
   \item the action of $\Cr(X,D)$ on $\cal{M}^{\irr}_X$ has infinitely many orbits,
   \item there exists no rational polyhedral fundamental domain for the action of $\Cr(X,D)$ on $\Eff(X)$. 
   \end{enumerate}
\end{thm}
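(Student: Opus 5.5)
The plan for (a) is to find a function on $\cal{M}^{\irr}_X$ that is invariant under $\Cr(X,D)$ but takes infinitely many values. Write $\Delta_{X,D}^{\nod}=\{\gamma_1,\dots,\gamma_m\}$ with $m\geq1$, and set
$$\phi(E)\coloneqq\sum_{j=1}^m E\cdot\gamma_j\qquad\text{for }E\in\cal{M}^{\irr}_X.$$
By Lemma \ref{lem:Cr(X,D)_preserves_Mirr_internal}, each $w\in\Cr(X,D)$ permutes the $\gamma_j$, so $\phi(w(E))=\sum_j E\cdot w^{-1}(\gamma_j)=\phi(E)$. Hence $\phi$ is constant on orbits, and it is enough to show that $\phi$ is unbounded on $\cal{M}^{\irr}_X$.

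To produce $(-1)$-curve classes with large $\phi$, I would use Eichler--Siegel type transvections rather than the Weyl group, since we do not know a priori which elements preserve $\Eff(X)$. Take $\beta\in N^1(X)$ with $\beta\cdot\delta=0$ and $\beta\cdot\cal D_i=0$ for all $i$, that is, $\beta\in Q(R^\smperp)$. Define
$$t_\beta(x)\coloneqq x+(x\cdot\delta)\beta-\Big(x\cdot\beta+\tfrac12\beta^2(x\cdot\delta)\Big)\delta.$$
A routine check shows the following.
\begin{enumerate}[\normalfont (i)]
\item $t_\beta$ is an isometry.
\item It fixes $\delta$ and every $\cal D_i$.
\item It preserves $N^1(X)$, because $Q(E_8^\smone)$ is even, so $\beta^2$ is even.
\end{enumerate}
Now fix any $(-1)$-curve $E$; one exists, for example $E_9$ from a blowdown structure. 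Then $E\cdot\delta=1$, and for $k\in\Z$ the class $E_k\coloneqq t_{k\beta}(E)$ lies in $\cal{M}_X$. We have $E_k\cdot\cal D_i=E\cdot\cal D_i\geq0$, and, since $\gamma_j\cdot\delta=0$,
$$E_k\cdot\gamma_j=E\cdot\gamma_j+k\,\beta\cdot\gamma_j .$$
We therefore need $\beta\in Q(R^\smperp)$ with $\beta\cdot\gamma_j>0$ for all $j$. Since $A^{\nod}$ is positive definite by Proposition \ref{prop:nodel_root_system}(b), the functionals $x\mapsto x\cdot\gamma_j$ are linearly independent on $\operatorname{span}(\gamma_j)\otimes\Q$. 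So there is a rational $\beta$ in this span with $\beta\cdot\gamma_j=1$ for all $j$, and after clearing denominators an integral one. This $\beta$ lies in $Q(R^\smperp)$ by \eqref{eq:R^nod_in_orth_lattice}, and $\beta\cdot\delta=0$.

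For $k\geq0$, the class $E_k$ then meets every component of $D$ and every internal $(-2)$-curve non-negatively. By \cite[Lemma 2.6]{LSX26b}, the same criterion used in the proof of Lemma \ref{lem:Cr(X,D)_preserves_Mirr_internal}, we get $E_k\in\cal{M}^{\irr}_X$. Finally $\phi(E_k)=\phi(E)+km\,c\to\infty$, where $c>0$ is the common value of $\beta\cdot\gamma_j$ after clearing denominators. So there are infinitely many orbits, which proves (a). The step I expect to be the main obstacle is confirming that \cite[Lemma 2.6]{LSX26b} applies exactly as stated to these classes; the integrality and isometry properties of $t_\beta$ are routine.

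For (b), argue by contradiction. Suppose $\Pi$ is a rational polyhedral fundamental domain, and let $E\in\cal{M}^{\irr}_X$. By Remark \ref{rem:extremal_rays_eff}, $\R_{\geq0}E$ is an extremal ray of $\Eff(X)$. It lies in some $g(\Pi)$, and since $g(\Pi)\subseteq\Eff(X)$ it is an extremal ray of $g(\Pi)$. Hence $g^{-1}(\R_{\geq0}E)$ is one of the finitely many extremal rays of $\Pi$. Since $g$ is an isometry, it maps the unique $(-1)$-class on such a ray to a $(-1)$-class, and by Remark \ref{rem:extremal_rays_eff} rays correspond bijectively to elements of $\cal{M}^{\irr}_X$. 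So $\Cr(X,D)$ would have finitely many orbits on $\cal{M}^{\irr}_X$, contradicting (a).
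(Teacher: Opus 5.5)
Your argument is correct and is essentially the paper's. With $E=E_9$, your class $t_{k\beta}(E)$ is exactly the paper's $\mathcal G_n=\mathcal E_9+\xi_n-(\xi_n\cdot\mathcal E_9+\tfrac12\xi_n^2)\delta$ with $\xi_n$ replaced by $k\beta$, irreducibility is checked with the same criterion \cite[Lemma 2.6]{LSX26b}, and (b) is the same extremal-ray counting; the one real difference is that your invariant $\sum_j E\cdot\gamma_j$ is already $\Cr(X,D)$-invariant, which lets you skip the paper's passage to the finite-index subgroup $H=\Ker\Psi$ and Lemma \ref{lem:finite_orbits}.
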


The main part of the proof of the theorem is contained in the following lemma.

\begin{lem}\label{lem_finite_index}
     Let $X$ be a non-generic Sakai surface, and let $D \in |{-}K_X|$. There exists a subgroup $H\subseteq\Cr(X,D)$ of finite index, such that the action of $H$ on $\mathcal{M}^{\irr}_X$ has infinitely many orbits.
\end{lem}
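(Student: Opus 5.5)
The plan is to take for $H$ the subgroup of $\Cr(X,D)$ fixing every internal $(-2)$-curve class, and to separate infinitely many $H$-orbits by the numerical invariant $E\mapsto E\cdot\gamma_1$.

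\emph{Choice of $H$.} By Lemma \ref{lem:Cr(X,D)_preserves_Mirr_internal}, every $w\in\Cr(X,D)$ permutes the finite set $\Delta_{X,D}^{\nod}=\{\gamma_1,\dots,\gamma_m\}$, as in \eqref{eq:permutation_internal_curves}. This gives a homomorphism $\Cr(X,D)\to\mathcal S_m$. I set $H$ to be its kernel, which has finite index. For $h\in H$ and $E\in\mathcal M^{\irr}_X$ we have $h(E)\cdot\gamma_i=h(E)\cdot h(\gamma_i)=E\cdot\gamma_i$. So the integer vector $(E\cdot\gamma_1,\dots,E\cdot\gamma_m)$ is constant on $H$-orbits. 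It therefore suffices to produce $(-1)$-curve classes $E_n$ with $E_n\cdot\gamma_1\to\infty$.

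\emph{Construction of the $E_n$.} I start from $E_0\coloneqq\mathcal E_9$, the class of the last exceptional curve of a blowdown structure. This is a $(-1)$-curve, and $E_0\cdot\delta=1$. Next I choose an integral class $\alpha\in Q(\Delta^{\nod}_{X,D})$ with $\alpha\cdot\gamma_j>0$ for all $j$. Such a class exists because $A^{\nod}$ is positive definite by Proposition \ref{prop:nodel_root_system}(b). Concretely, I take $c=N(A^{\nod})^{-1}(1,\dots,1)^T$ with $N$ clearing denominators, and put $\alpha=-\sum c_i\gamma_i$, so that $\alpha\cdot\gamma_j=(A^{\nod}c)_j=N$. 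Since the $\gamma_i$ are disjoint from $D$, we have $\alpha\cdot\mathcal D_k=0$ for all $k$, and in particular $\alpha\cdot\delta=0$. I then use the Kac-type translation
\[
t_\alpha(x)=x+(x\cdot\delta)\alpha-\Big((x\cdot\alpha)+\tfrac12\alpha^2(x\cdot\delta)\Big)\delta ,
\]
with the signs chosen so that $t_\alpha$ is an isometry of the intersection form. A direct computation shows that $t_\alpha$ preserves $N^1(X)$ (note $\alpha^2$ is even) and fixes $\delta$. It also fixes each $\mathcal D_k$, since $\mathcal D_k\cdot\delta=\mathcal D_k\cdot\alpha=0$. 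Set $E_n\coloneqq t_\alpha^n(E_0)$. Because $t_\alpha$ is an isometry fixing $\delta=[-K_X]$, we get $E_n^2=K_X\cdot E_n=-1$. Moreover $E_n\cdot\mathcal D_k=E_0\cdot\mathcal D_k\geq0$ and
\[
E_n\cdot\gamma_j=E_0\cdot\gamma_j+nN\geq0,
\]
using $\gamma_j\cdot\delta=0$ and $E_n\cdot\delta=1$.

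\emph{Main obstacle.} The delicate point is showing that $E_n$ is actually the class of a $(-1)$-curve. Two things are needed: that $E_n$ is effective, i.e.\ $E_n\in\mathcal M_X$, and that it is irreducible. Effectivity should follow from Riemann--Roch: $\chi(E_n)=1$, and $h^2(E_n)=h^0(K_X-E_n)=0$ because $(K_X-E_n)\cdot H<0$ for a suitable nef class $H$. Once $E_n\in\mathcal M_X$, the nonnegativity of $E_n$ against all of $\Comp(D)\cup\Delta^{\nod}_{X,D}$ lets \cite[Lemma 2.6]{LSX26b} give $E_n\in\mathcal M^{\irr}_X$, exactly as in the proof of Lemma \ref{lem:Cr(X,D)_preserves_Mirr_internal}. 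Note that $t_\alpha$ need not lie in $\Cr(X,D)$; it is used only as a lattice isometry producing candidate classes.

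Since $E_n\cdot\gamma_1=E_0\cdot\gamma_1+nN$ takes infinitely many values, the $E_n$ lie in pairwise distinct $H$-orbits. This proves Lemma \ref{lem_finite_index}.
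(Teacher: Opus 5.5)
Your proposal is correct and is essentially the paper's proof: you use the same finite-index kernel $H$ of the permutation action on $\Delta^{\nod}_{X,D}$, the same orbit invariant $E\mapsto E\cdot\gamma$, and the same classes. Indeed, the paper's $\mathcal{G}_n=\mathcal{E}_9+\xi_n-(\xi_n\cdot\mathcal{E}_9+\tfrac12\xi_n^2)\delta$ is exactly your translate $t_{\xi_n}(\mathcal{E}_9)$ by an element $\xi_n$ of the nodal lattice, and it is shown to lie in $\mathcal{M}^{\irr}_X$ via \cite[Lemma 2.6]{LSX26b}. Your iterating $t_\alpha$ (so that $\xi=n\alpha$) and your explicit Riemann--Roch check of effectivity are only cosmetic refinements; the nef class you need there is $\delta=[-K_X]$, for which $(K_X-E_n)\cdot\delta=-1$.
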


\begin{proof}
We prove this result in several steps. Let $\Delta_{X,D}^{\nod} = \{\gamma_1,\ldots, \gamma_m\}$. 

    \medskip

\emph{Step 1.}
In this step we construct an infinite sequence $\{\mathcal{G}_n\}$ of classes in $\cal{M}_X^{\irr}$. The construction is inspired by \cite[Proposition 3.1]{Lah04}.

By Proposition \ref{prop:nodel_root_system}(b), the intersection matrix of $\{\gamma_1,\ldots,\gamma_m\}$ is negative definite. Thus, it is invertible, and we denote by $d$ its determinant. By Cramer's rule, for each $n \in \Z$ there is a unique set $\{a_{1,n},\ldots,a_{m,n} \}\subseteq \Z^m$ such that 
 $$ \sum_{i=1}^m a_{i,n} \gamma_i\cdot \gamma_j = |d| \cdot (n-\gamma_j \cdot \mathcal{E}_9) \quad \text{for each }j\in\{ 1,\ldots, m\}. $$
Set $\xi_n \coloneqq \sum_{i=1}^m a_{i,n}\gamma_i\in N^1(X)$. Then it is easy to see that $\xi_n^2$ is even, and set
    $$ \mathcal{G}_n \coloneqq \textstyle \mathcal{E}_9 +\xi_n  - (\xi_n \cdot \mathcal{E}_9  + \frac{1}{2}\xi_n^2 )\delta\in N^1(X). $$
    Since $\delta^2 = \xi_n \cdot \delta = 0$ and $\mathcal E_9\cdot \delta=1$, we have
    $$ \mathcal{G}_n^2 =\textstyle -1 + \xi_n^2 + 2 \gamma_n \cdot \mathcal{E}_9 - 2(\xi_n \cdot \mathcal{E}_9  + \frac{1}{2} \xi_n^2 ) = -1 $$
    and 
    $$ \mathcal{G}_n \cdot \delta = \mathcal{E}_9 \cdot \mathcal{G}_n = 1. $$
    Thus, $\mathcal{G}_n \in \cal{M}_X$. 

Set $h \coloneqq \max\{\gamma_j \cdot \mathcal{E}_9\mid 1\leq j\leq m \}$, and fix $n\geq h$. Next we show that for each $n\geq h$ we have $\mathcal G_n\in \cal{M}_X^{\irr}$. By \cite[Lemma 2.6]{LSX26b} it remains to show that $\mathcal G_n\cdot\gamma \geq 0$ for all $\gamma \in \Comp(D) \cup \Delta^{\nod}_{X,D}$. To that end, for each $\gamma\in \Comp (D)$ we have
$$ \mathcal{G}_n  \cdot \gamma = \cal{E}_9 \cdot \gamma \geq 0, $$
where the first equality follows since the internal $({-}2)$-curves are disjoint from the components of $D$, and the last equality follows from \cite[Appendix C]{LSX26b}. Further, for $\gamma \in \Delta^{\nod}_{X,D}$ we have
\begin{equation}\label{eq:anequality}
    \mathcal{G}_n  \cdot \gamma = \mathcal{E}_9 \cdot \gamma + \xi_n \cdot \gamma = \mathcal{E}_9 \cdot \gamma + |d|(n -  \mathcal{E}_9\cdot \gamma) \geq 0,
    \end{equation}
where we used that $\gamma\cdot \cal{E}_9\geq 0$ as $\mathcal{E}_9$ is the class of a $({-}1)$-curve and $\gamma$ is the class of a $({-}2)$-curve. 

Finally, for distinct positive integers $n$ and $n'$, we have $\cal G_n \neq \cal G_{n'}$, since $\cal G_n \cdot \gamma \neq \cal G_{n'}\cdot \gamma$ for any $\gamma \in \Delta^{\nod}_{X,D}$ by \eqref{eq:anequality}. Then $\{\mathcal G_n\}_{n\geq h}$ is the desired sequence.

\medskip

\emph{Step 2.}
In this step we finish the proof of the lemma.

Since $\Cr(X,D)$ preserves $ \Delta_{X,D}^{\nod}$ by Lemma \ref{lem:Cr(X,D)_preserves_Mirr_internal}, we obtain a  group homomorphism
    $$ \Psi \colon \Cr(X,D) \to S_{m},\quad w\mapsto \sigma \text{ such that } w(\gamma_i) = \gamma_{\sigma(i)}. $$
    Then $H \coloneqq \Ker \Psi $ is the subgroup of $\Cr(X,D)$ fixing each $\gamma_i$, and $H$ clearly has finite index in $\Cr(X,D)$. 

Assume, for contradiction, that the action of $H$ on $\mathcal{M}^{\irr}_X$ has only finitely many orbits. 
Fix $\gamma\in \Delta^{\nod}_{X,D}$. Then for any $w \in H$ and for any $\beta\in N^1(X)$ we have 
$$ w(\beta) \cdot \gamma = \beta \cdot w^{-1}(\gamma) = \beta \cdot \gamma, $$
where the first equality follows since $H\subseteq \Cr(X,D)$, and the second equality follows from Step 1. Thus, the map $ \mathcal{M}^{\irr}_X  \rightarrow \Z,  \ \beta  \mapsto \beta \cdot \gamma $ is constant on each orbit of $H$, and in particular, since we assume that there are finitely many orbits of $H$ on $\mathcal{M}^{\irr}_X$,
\begin{equation}\label{eq:finite_int_numbers}
    \text{ the set }\{ \beta\cdot \gamma\mid \beta \in  \mathcal{M}^{\irr}_X \} \text{  is finite.}
\end{equation}
On the other hand, by the last paragraph in Step 2, 
the set $\{ \mathcal{G}_n  \cdot \gamma\mid n\geq h \}$ is infinite, which contradicts \eqref{eq:finite_int_numbers} and concludes the proof.
\end{proof}

We can now prove Theorem \ref{thm:no_fund_dom_non_generic}.

\begin{proof}[Proof of Theorem \ref{thm:no_fund_dom_non_generic}]
Part (a) follows from Lemmas \ref{lem:finite_orbits} and \ref{lem_finite_index}. 

For part (b), suppose that there exists a rational polyhedral fundamental domain $\Xi \subseteq \Eff(X)$ for the action of $\Cr(X,D)$ on $\Eff(X)$. We will derive a contradiction in two steps.

\medskip

\emph{Step 1.}
In this step, we show that $\cal{M}^{\irr}_X \cap \Xi$ is finite.

Let $\gamma \in \cal{M}^{\irr}_X \cap \Xi$. 
Since $\gamma$ is the class of a $({-}1)$-curve, it spans an extremal ray of $\Eff(X)$ by Remark \ref{rem:extremal_rays_eff}, hence it spans an extremal ray of $\Xi$. As $\Xi$ has only finitely many extremal rays by assumption, we conclude by Remark \ref{rem:extremal_rays_eff}. 

\medskip

\emph{Step 2.}
In this step, we show that $\Cr(X,D)$ acts on $\cal{M}^{\irr}_X$ with finitely many orbits, which contradicts part (a) and concludes the proof of part (b).

    Let $\gamma \in \cal{M}^{\irr}_X \subseteq \Eff(X)$. Since $\Xi$ is a fundamental domain for the action of $\Cr(X,D)$ on $\Eff(X)$,  there exists $g \in \Cr(X,D)$ such that $g(\gamma) \in \Xi$. Moreover, $g(\gamma) \in \cal{M}^{\irr}_X$ by Lemma \ref{lem:Cr(X,D)_preserves_Mirr_internal}.  Therefore, $g(\gamma)$ is one of the finitely many elements in $\cal{M}^{\irr}_X \cap \Xi$ from Step 1, which implies that $\cal{M}^{\irr}_X$ decomposes into finitely many $\Cr(X,D)$-orbits, as claimed.
\end{proof}

\section{Proof of Theorem \ref{thm:main_thm}}

Before proving the main result of this paper, we need the following auxiliary lemma.

\begin{lem}\label{lem:Cr(X,D)_finite_index_in_Cr}
    Let $X$ be a non-generic Sakai surface and let $D\in |{-}K_X|$. Then $\Cr(X,D)$ is a subgroup of $\Cr(X)$ of finite index. 
\end{lem}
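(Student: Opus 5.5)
We show that every $w\in\Cr(X)$ permutes the finite set $\Comp(D)$; then $\Cr(X,D)$ is the kernel of a homomorphism $\Cr(X)\to S_k$, where $k=|\Comp(D)|$, and so has finite index.

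\emph{Step 1.} We check that $\Cr(X)$ permutes the classes of $({-}2)$-curves on $X$. Let $C$ be a $({-}2)$-curve and $w\in\Cr(X)$. Since $w$ fixes $[K_X]$ and preserves the intersection form, $w([C])$ satisfies $w([C])^2=-2$ and $K_X\cdot w([C])=0$. Since $w$ preserves $\Eff(X)$, hence $\overline{\Eff}(X)$, and maps extremal rays to extremal rays, $w([C])$ spans an extremal ray of $\overline{\Eff}(X)$ of negative square. By Remark~\ref{rem:extremal_rays_eff}, $\Eff(X)$ is spanned by $\mathcal M^{\irr}_X\cup\Comp(D)\cup\Delta^{\nod}_{X,D}$, so this ray is spanned by one of these generating classes. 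A class in $\mathcal M^{\irr}_X$ has self-intersection $-1$, and a positive multiple $\lambda[E]$ of such a class has $K_X\cdot\lambda[E]=-\lambda\neq 0$, which rules it out since $K_X\cdot w([C])=0$. Hence $w([C])=\lambda[C']$ for a $({-}2)$-curve $C'$. The equality $-2=\lambda^2(-2)$ forces $\lambda=1$. This is the fact already quoted in the proof of Lemma~\ref{lem:Cr(X,D)_preserves_Mirr_internal} from \cite[Proof of Theorem D, Step 4]{LSX26b}, and we may simply cite it.

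\emph{Step 2.} We distinguish components of $D$ from internal curves intrinsically. By \cite[Lemma 2.7]{LSX26b}, the $({-}2)$-curve classes are linearly independent and finite in number, so the set $\Sigma$ of all $({-}2)$-curve classes is a finite $w$-stable set. In the case where every component of $D$ is a $({-}2)$-curve (i.e.\ $R\neq A_0^\smone$), $\Sigma=\Comp(D)\sqcup\Delta^{\nod}_{X,D}$. Two $({-}2)$-curves meet nonnegatively, internal curves are orthogonal to every $\cal D_i$, and $D$ is connected. We split $\Sigma$ into connected components of its intersection graph (edges where the intersection number is positive); this decomposition is preserved by $w$. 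The classes of $\Comp(D)$ form the unique connected component whose intersection matrix is degenerate (affine type, with null vector $\delta$). Every component made of internal curves is positive definite by Proposition~\ref{prop:nodel_root_system}(b). Since $w$ preserves intersection matrices, it maps $\Comp(D)$ to itself. Alternatively, one can use that $\delta=\sum n_i\cal D_i$ is fixed by $w$, so the supports are forced, since $\Sigma$ is linearly independent. For $R=A_0^\smone$, $\Comp(D)=\{\delta\}$ is fixed by every element of $\Cr(X)$, so in this case $\Cr(X,D)=\Cr(X)$ and there is nothing to prove.

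\emph{Step 3.} We get a homomorphism $\Cr(X)\to S_{|\Comp(D)|}$ whose kernel is exactly $\Cr(X,D)$, which therefore has finite index.

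The main obstacle is Step 1, namely checking rigorously that extremality transports $({-}2)$-curve classes to $({-}2)$-curve classes. Here one should ensure that $w$ preserves $\overline{\Eff}(X)$ and its extremal rays, and that the generating set in Remark~\ref{rem:extremal_rays_eff} accounts for all extremal rays; citing the argument in \cite{LSX26b} should handle this.
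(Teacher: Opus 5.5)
Your proposal is correct and takes essentially the same approach as the paper: you use that $\Cr(X)$ permutes the $({-}2)$-curve classes, treat $R=A_0^\smone$ separately, separate $\Comp(D)$ from $\Delta^{\nod}_{X,D}$ by the degeneracy and indecomposability of the intersection matrix of $\Comp(D)$ versus the negative definiteness of the nodal one, and conclude with a homomorphism to a symmetric group (your connected-component formulation is in fact cleaner than the paper's argument by contradiction). Your alternative argument is also valid: $w(\delta)=\delta=\sum n_i\cal D_i$ with all $n_i>0$, together with the linear independence of all $({-}2)$-curve classes, forces $w(\Comp(D))=\Comp(D)$, and it dispenses with Proposition~\ref{prop:nodel_root_system}(b) and with the indecomposability of the intersection matrix of $\Comp(D)$.
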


\begin{proof}
If $R = A^\smone_0$, then $D$ is irreducible. Since $\Cr(X)$ fixes the class $[-K_X]$, we have $\Cr(X) = \Cr(X,D)$.

In the following, we assume that $R \neq A^\smone_0$, and let $\sigma \in \Cr(X)$. It suffices to show that $\sigma\big(\Comp(D)\big) = \Comp (D)$: indeed, then similarly as in Step 1 of the proof of Lemma \ref{lem_finite_index} we obtain a group homomorphism from $\Cr(X)$ to a finite symmetric group, whose kernel is $\Cr(X,D)$, which proves the lemma.

Assume, for contradiction, that $\sigma\big(\Comp(D)\big) \neq \Comp (D)$. Denote by $\gamma_1,\ldots, \gamma_m$ the classes in $\Delta^{\nod}_{X,D}$ and by $\cal{D}_0,\ldots, \cal{D}_u$ the classes in $\Comp (D)$. Since $\sigma$ preserves the intersection form, the intersection matrix $ \big(\sigma(\gamma_i)\cdot \sigma(\gamma_j)\big)_{i,j=1}^m =  (\gamma_i\cdot \gamma_j)_{i,j=1}^m $ is negative definite by Proposition \ref{prop:nodel_root_system}(b), whereas the intersection matrix 
$$ B:=\big(\sigma(\cal D_s)\cdot \sigma(\cal D_t)\big)_{s,t=0}^u =  (\cal D_s\cdot \cal D_t)_{s,t=0}^u $$
is negative semi-definite
by \cite[\S 4.5]{LSX26a}. This yields that
\begin{equation}\label{eq:234}
\sigma\big(\Comp(D)\big) \neq \Delta^{\nod}_{X,D}.
\end{equation}
On the other hand, since $\sigma$ permutes the classes of $({-}2)$-curves on $X$ by \cite[Proof of Theorem D]{LSX26b}, we have
$$ \Comp(D)\cup \Delta^{\nod}_{X,D} = \sigma\big( \Comp(D)\cup \Delta^{\nod}_{X,D} \big) =   \sigma\big(\Comp(D)\big) \cup \sigma\big(\Delta^{\nod}_{X,D}\big). $$
This, together with \eqref{eq:234} and the assumption that $\sigma\big(\Comp(D)\big) \neq \Comp (D)$, implies that there exists $0 \leq r < s$ such that
$$ \{\sigma(\mathcal{D}_0),\ldots,\sigma(\mathcal{D}_s)\} = \{\mathcal{D}_{i_0},\ldots,\mathcal{D}_{i_r},\gamma_{i_{r+1}},\ldots,\gamma_{i_s}\}. $$
Since $\cal D_i \cdot \gamma_j = 0$ for each  $\cal{D}_i\in \Comp(D)$ and each $ \gamma_j \in \Delta_{X,D}^{\nod}$, the equality above implies that the matrix $B$ is a direct sum of at two matrices. But the matrix $B$ is indecomposable by \cite[\S 4.5 and Appendix C]{LSX26a}, which is a contradiction and finishes the proof.
\end{proof}

We are now ready to prove our main result.

\begin{proof}[Proof of Theorem \ref{thm:main_thm}]
Recall first that $\Nef(X) = \Nef^e(X) = \Nef^+(X)$ and $\Eff(X) = \overline{\Eff}(X)  = \Eff^+(X)$ by \cite[Theorem 3.1]{LSX26a}.

Suppose that there exists a rational polyhedral fundamental domain for the action of $G$ on $\Nef(X)$. In particular, the action of $G$ on $\Nef(X)$ is of polyhedral type, see Definition \ref{dfn:polyhedral_type}(a). Thus, by \cite[Proposition-Definition 4.1]{Loo14}, by duality between $N^1(X)_\R$ and $N_1(X)_\R$ induced by the intersection pairing, and since $G$ preserves the intersection pairing by assumption, the contragradient action of $G$ on the cone $\overline{\Eff}(X)$ is also of polyhedral type. Since $G \subseteq \Cr(X)$ by \cite[Proposition A.1]{LSX26a} and since $\Cr(X)$ preserves $\overline{\Eff}(X)$ by definition, it follows that the action of $\Cr(X)$ on $\overline{\Eff}(X)$ is also of polyhedral type. Therefore, by Lemma \ref{lem:Cr(X,D)_finite_index_in_Cr} and by \cite[Proposition 4.6]{Loo14}, the action of $\Cr(X,D)$ on $\overline{\Eff}(X)$ is of polyhedral type as well. Consequently, the action of $\Cr(X,D)$ on $\Eff^+(X) = \overline{\Eff}(X)$ admits a rational  polyhedral fundamental domain by a result of Looijenga, see \cite[Theorem 2.20]{LSX26a} for instance. But this contradicts Theorem \ref{thm:no_fund_dom_non_generic}, which concludes the proof. 
\end{proof}

\section{Generic nef and effective cones}\label{sec:generic_nef_cone}
Although no variant of the Cone Conjecture holds for non-generic Sakai surfaces,  we are still able to relate the nef cones of generic and non-generic Sakai surfaces of the same type.

\begin{dfn}\label{dfn:eff_gen_nef_gen}
    Let $X$ be a Sakai surface and let $D\in |{-}K_X|$. With the notation from Notation \ref{not:MXNX}, we define the \emph{generic effective cone} $\Eff(X)_{\gen}\subseteq N^1(X)_{\R}$ as the convex hull of $\cal{N}_X\cup \Comp (D)$, and we define the \emph{generic nef cone} as 
    $$ \Nef(X)_{\gen} \coloneqq \{ x \in N^1(X)_{\R} \mid x \cdot \gamma \geq 0 \text{ for each } \gamma \in \cal N_X \cup \Comp(D)\}. $$
\end{dfn}

The term \emph{generic} in the Definition \ref{dfn:eff_gen_nef_gen} will be further explained in Corollary \ref{cor:Mirred}(b)(c). 

The main result of this section is the following, which is similar to \cite[Theorem 3.2]{GHK15} and \cite[Corollary 6.3]{Fri15}.

\begin{thm}\label{thm:nef_fund_domain_for_nefgen}
    Let $X$ be a Sakai surface. Then  $\Nef(X)$ is a fundamental domain for the action of the nodal Weyl group $W^{\nod}$  on the generic nef cone $\Nef(X)_{\gen}$.
\end{thm}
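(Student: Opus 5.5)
The plan is to identify $\Nef(X)$ with the part of $\Nef(X)_{\gen}$ lying in the closed negative fundamental chamber $\overline{\mathcal F^{\nod}}$. Then I will show that $\Nef(X)_{\gen}$ is $W^{\nod}$-stable and contained in the Tits cone $\mathcal T^{\nod}$. The statement then follows from the standard fact that $\overline{\mathcal F^{\nod}}$ is a fundamental domain for $W^{\nod}$ acting on $\mathcal T^{\nod}$; here $\mathcal T^{\nod}=N^1(X)_\R$, since $W^{\nod}$ is finite by Proposition~\ref{prop:nodel_root_system}(c). Concretely, the claim is
\[
\Nef(X)=\Nef(X)_{\gen}\cap\overline{\mathcal F^{\nod}}.
\]
By Remark~\ref{rem:extremal_rays_eff}, $\Eff(X)$ is spanned by $\mathcal M^{\irr}_X\cup\Comp(D)\cup\Delta^{\nod}_{X,D}$. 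Hence $\Nef(X)$ is cut out by the inequalities $x\cdot\beta\ge0$ for $\beta$ in this set.

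First I prove the inclusion $\Nef(X)\subseteq\Nef(X)_{\gen}$. For this it suffices to show $\mathcal N_X\subseteq\Eff(X)$. Every $E\in\mathcal M_X$ satisfies $h^0(X,E)>0$ by Riemann--Roch, since $\chi(E)=1$ and $h^2(E)=h^0(K_X-E)=0$. So $\mathcal N_X\subseteq\Eff(X)$. Conversely, let $x\in\Nef(X)_{\gen}\cap\overline{\mathcal F^{\nod}}$. Then $x\cdot\gamma\ge0$ for all $\gamma\in\Delta^{\nod}_{X,D}$ and $x\cdot\mathcal D_i\ge0$. By \cite[Lemma 2.6]{LSX26b}, $\mathcal M^{\irr}_X$ is exactly the set of $E\in\mathcal M_X$ with $E\cdot\mathcal D_i\ge0$ and $E\cdot\gamma\ge0$, so $\mathcal M^{\irr}_X\subseteq\mathcal N_X$. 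Hence $x$ is nonnegative on all generators of $\Eff(X)$, and $x$ is nef. This proves the displayed equality.

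Second, I show that $W^{\nod}$ preserves $\mathcal N_X\cup\Comp(D)$, so that it acts on $\Nef(X)_{\gen}$. It suffices to check this for the reflections $r_{\gamma_i}$. These fix each $\mathcal D_j$, since $\gamma_i\cdot\mathcal D_j=0$. They also fix $\delta$ and preserve the intersection form and $N^1(X)$, hence they preserve $\mathcal M_X$. For $E\in\mathcal N_X$ we get $r_{\gamma_i}(E)\cdot\mathcal D_j=E\cdot\mathcal D_j\ge0$, by \eqref{eq:preserve_pairing}. So $r_{\gamma_i}(\mathcal N_X)=\mathcal N_X$, and by duality $W^{\nod}(\Nef(X)_{\gen})=\Nef(X)_{\gen}$.

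Finally, fix $x\in\Nef(X)_{\gen}$. Since $W^{\nod}$ is finite, $\mathcal T^{\nod}$ is the whole space and $\overline{\mathcal F^{\nod}}$ is a fundamental domain for the action (\cite{Kac90}, \cite{Bou08}). Thus there is $w\in W^{\nod}$ with $w(x)\in\overline{\mathcal F^{\nod}}\cap\Nef(X)_{\gen}=\Nef(X)$. Moreover, $w(\mathcal F^{\nod})\cap\mathcal F^{\nod}\neq\emptyset$ forces $w=1$. Since $\Nef(X)$ has nonempty interior inside $\mathcal F^{\nod}$ (ample classes meet internal curves positively), interiors of distinct translates of $\Nef(X)$ are disjoint. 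I expect the main obstacle to be the precise use of \cite[Lemma 2.6]{LSX26b}: it characterises $\mathcal M^{\irr}_X$ inside $\mathcal M_X$, and I must confirm that it applies exactly with the inequalities against $\Comp(D)\cup\Delta^{\nod}_{X,D}$. I also need to handle the technical point that the fundamental-domain statement uses the closed chamber, with disjointness only on interiors, matching Definition~\ref{dfn:polyhedral_type}(b).
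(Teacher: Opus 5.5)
Your proof is correct. It follows the paper in the first and last steps: the identification $\Nef(X)=\Nef(X)_{\gen}\cap\overline{\mathcal F^{\nod}}$ is Lemma~\ref{lem:nef_fundchamber}, and the disjointness of interiors comes from \cite[Proposition 3.12]{LSX26b}.

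In the two middle steps you take a different and more elementary route.

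\emph{Invariance of $\Nef(X)_{\gen}$.} The paper proves the stronger statement $W^{\nod}\subseteq\Cr(X_{\gen},D_{\gen})$ in Lemma~\ref{lem:wnod_preserves_nef_gen}. It does so by combining Sakai's identification $\Cr(X_{\gen},D_{\gen})=W(R^\smperp)$ from \cite[Theorem 26(a)]{Sak01} with Lemma~\ref{lem:nodal_roots}(a), and it treats $R=A_6^\smone$ separately. You instead check directly that each $r_{\gamma_i}$ fixes every $\mathcal D_j$ and $\delta$, and hence permutes $\mathcal N_X$.

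\emph{Tits cone.} The paper uses the chain $\Nef(X)_{\gen}\subseteq\mathcal T(E_8^\smone)\subseteq\mathcal T(R^\smperp)\subseteq\mathcal T^{\nod}$, from Lemmas~\ref{lem:nefgen_in_cplus}(d) and~\ref{lem:nodal_roots}(c). You observe instead that $\mathcal T^{\nod}=N^1(X)_\R$ simply because $W^{\nod}$ is finite: pick the element of $W^{\nod}x$ minimising a linear functional that is positive on all $\gamma_i$.

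What each route buys:
\begin{itemize}
\item Your argument is self-contained and uniform in the type $R$. It needs neither Corollary~\ref{cor:generic_types} nor the $A_6^\smone$ case distinction.
\item Your reflection computation, combined with Corollary~\ref{cor:Mirred}(b) and Lemma~\ref{lem:isoN1}, shows that $\phi^{-1}\circ r_{\gamma_i}\circ\phi$ preserves the lattice, the intersection form, $\Eff(X_{\gen})$ and the component classes of $D_{\gen}$. So it already recovers $W^{\nod}\subseteq\Cr(X_{\gen},D_{\gen})$ without \cite{Sak01}.
\item The paper's route gives the finer information $\Delta^{\nod}_{X,D}\subseteq\Delta(R^\smperp)^{\re}_+$. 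It also locates $W^{\nod}$ inside a specific affine factor of $W(R^\smperp)$, for instance inside $W_{\mathscr D'}$ when $R=A_6^\smone$. This is exactly what the proof of Lemma~\ref{lem:not_normal} uses.
\end{itemize}

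Two small points to tighten:
\begin{itemize}
\item The vanishing $h^0(K_X-E)=0$ needs a line. $D$ is nef, being of canonical type, while $(K_X-E)\cdot D=-1$.
\item In the disjointness step, what you need is $\Int\Nef(X)\subseteq\Int\overline{\mathcal F^{\nod}}=\mathcal F^{\nod}$. The fact that $\Nef(X)$ has nonempty interior is not what drives the conclusion.
\end{itemize}
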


For a Sakai surface $X$ and $D\in|{-}K_X|$, since $\cal{N}_X\subseteq\cal{M}_X\subseteq \Eff(X)$ and $\Eff(X)$ is convex, we have
\begin{equation}\label{eq:eff_gen_inclusion}
    \Eff(X)_{\gen}\subseteq \Eff(X).
\end{equation}
If $X$ is a generic Sakai surface, then $\Delta^{\nod}_{X,D} = \emptyset$ and $W^{\nod}$ is trivial. Moreover, it follows from \cite[Lemma 5.1(d)]{LSX26b} that $\Eff(X)_{\gen}=\Eff(X)$ and $\Nef(X) = \Nef(X)_{\gen}$. Therefore, Theorem \ref{thm:nef_fund_domain_for_nefgen} is trivially satisfied.

As a first step to prove Theorem \ref{thm:nef_fund_domain_for_nefgen}, we introduce an isomorphism between the N\'eron-Severi spaces of different Sakai surfaces of the same type, which allows us to compare their cones of divisors in a common vector space.
 
\begin{lem}\label{lem:isoN1}
	Let $X$ and $X'$ be two Sakai surfaces of the same type $R$, and let $D\in|{-}K_X|$ and $D'\in|{-}K_{X'}|$. Let $\mathcal{E}_0,\ldots,\mathcal{E}_9$ and  $\mathcal{E}'_0,\ldots,\mathcal{E}'_9$ denote the tautological bases of $N^1(X)_{\R}$ and $N^1(X')_{\R}$, respectively, satisfying \eqref{eq:intersectnumbers}. Let $\mathcal{D}_i$ and $\mathcal{D}_i'$ denote the classes of components of $D$ and $D'$, respectively. Then the $\R$-vector space isomorphism $\phi\colon N^1(X)_{\R} \rightarrow N^1(X')_{\R}$ given by $\mathcal{E}_i \mapsto \mathcal{E}'_i$ for all $i=0,\dots,9$ satisfies:
	\begin{enumerate}[\normalfont (a)]
		\item $\phi$ preserves the intersection pairing,
		\item $\phi\big([{-}K_X]\big) = [{-}K_{X'}]$,
		\item $\phi(\cal D_i ) = \cal D_i'$ for every $\cal D_i \in \Comp(D)$,
		\item $\phi({\cal M}_X) = \mathcal{M}_{X'}$.
	\end{enumerate}
The same properties hold for the inverse of $\phi$. 
\end{lem}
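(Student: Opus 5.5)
Parts (a), (b) and (d) are formal consequences of the tautological bases; part (c) is where the type $R$ enters.

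For (a), note that $\phi$ sends a basis satisfying \eqref{eq:intersectnumbers} to a basis satisfying the same relations. By bilinearity, $\phi(x)\cdot\phi(y)=x\cdot y$ for all $x,y\in N^1(X)_\R$.

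For (b), apply \eqref{eq:anticandiv} on both surfaces:
$$\phi\big([{-}K_X]\big)=\phi\big(3\mathcal E_0-\textstyle\sum_{i\geq1}\mathcal E_i\big)=3\mathcal E_0'-\textstyle\sum_{i\geq1}\mathcal E_i'=[{-}K_{X'}].$$
By \eqref{eq:picgroup}, $\phi$ also maps the lattice $N^1(X)$ isomorphically onto $N^1(X')$.

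For (d), recall that on a rational surface every class $E\in N^1(X)$ with $E^2=K_X\cdot E=-1$ satisfies $h^0(X,E)>0$. This follows from Riemann--Roch, $\chi(E)=1$, together with $h^2(X,E)=h^0(X,K_X-E)=0$; the latter holds because $(K_X-E)\cdot(-K_X)=-1<0$ and $-K_X$ is effective with components of nonpositive self-intersection, so $-K_X$ is nef. Hence $\mathcal M_X$ is exactly the set of integral classes with $E^2=K_X\cdot E=-1$. By (a), (b) and integrality, $\phi(\mathcal M_X)=\mathcal M_{X'}$. If one prefers not to rely on this effectivity remark, \cite[Lemma 2.6]{LSX26b} or the analogous statement in \cite{LSX26a} gives the same characterisation.

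The main point is (c). The type $R$ is determined by the classes of the components of $D$ expressed in the tautological basis, through Sakai's classification \cite[pp.\ 181--184]{Sak01} and the tables in \cite[Appendix C]{LSX26b}. There each $\mathcal D_i$ is given as an explicit integral combination of $\mathcal E_0,\dots,\mathcal E_9$ that depends only on $R$, once the blowdown structure is chosen in the standard form of the table. I would therefore assume, as the statement implicitly does, that the blowdown structures are the standard ones for type $R$. Under that assumption the coefficients of $\mathcal D_i$ and $\mathcal D_i'$ coincide, and $\phi(\mathcal D_i)=\mathcal D_i'$ follows immediately from linearity.

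The obstacle is justifying that such a normalisation exists. By \cite{Sak01}, any two Sakai surfaces of the same type admit blowdown structures in which $\Comp(D)$ is given by the same expressions, possibly after composing with an element of $W(E_8^\smone)$ that realises the standard embedding of $R$. This holds except for the pair $A_7^\smone$ and ${A_7^\smone}'$, which are distinguished as different types precisely because the embeddings differ.

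Finally, $\phi^{-1}$ is the map defined in the same way with the roles of $X$ and $X'$ exchanged, so all four properties hold for it by symmetry.
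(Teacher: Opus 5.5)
Your proposal is correct and follows the paper's proof: (a)--(c) are argued exactly as there, with the same implicit convention that both tautological bases put the $\mathcal D_i$ in the standard form for type $R$. For (d), your Riemann--Roch argument reproves the effectivity statement the paper takes from \cite[Lemma 2.3(b)]{LSX26b}. That lemma, not Lemma 2.6 (which characterises $\mathcal M_X^{\irr}$), is the right fallback citation.

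One justification needs fixing. Nonpositive self-intersection of the components of $D$ does not by itself make $-K_X$ nef. The correct reason is that $D$ is of canonical type, so $D\cdot D_i=0$ for each component $D_i$, while $D\cdot C\geq0$ for every other curve $C$.
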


\begin{proof}
	Part (a) follows immediately from \eqref{eq:picgroup} and \eqref{eq:intersectnumbers}, whereas (b) follows from \eqref{eq:anticandiv}. Part (c) follows from the fact that $X$ and $X'$ have the same type $R$, and that each $\mathcal{D}_i$ is a linear combination of $\mathcal E_j$. Part (d) follows from (a) and (b) together with \cite[Lemma 2.3(b)]{LSX26b}.
\end{proof}

\begin{rem}\label{rem:transport_group_action}
    The isomorphism $\phi$ from Lemma \ref{lem:isoN1} allows us to transport linear group actions on different N\'eron--Severi spaces via conjugation to the same real vector space. In particular, for any Sakai surface $X$ with $D\in |{-}K_X|$, the group $\Cr(X,D)$ can be considered as a subgroup of  $\GL(N^1(X')_{\R})$ via  the injective homomorphism 
$$ \Cr(X,D)  \rightarrow \GL(N^1(X')_{\R}),\quad g  \mapsto g' \coloneqq \phi\circ g \circ \phi^{-1}. $$
Recall that for every Sakai surface $X$ of type $R$, there exists a generic Sakai surface $X_{\gen}$ of the same type by \cite[Proposition B.4]{LSX26a}. Let $D_{\gen} \in |{-}K_{X_{\gen}}|$. In the sequel, we will frequently consider the isomorphism
\begin{equation}\label{eq:phi_gen}
    \phi \colon N^1(X_{\gen})_{\R} \to N^1(X)_{\R},
\end{equation}
which identifies $\Cr(X_{\gen}, D_{\gen})$ as a subgroup of $\GL\big( N^1(X)_{\R}\big)$.
\end{rem}

Using the isomorphism from Lemma \ref{lem:isoN1}, we can identify the generic nef cone and the generic effective cone of a Sakai surface with the nef cone and the effective cone of a generic Sakai surface, respectively.

\begin{cor}\label{cor:Mirred}
	As in Notation \ref{not:MXNX}, let $X$  be a Sakai surface and consider the isomorphism $\phi\colon N^1(X_{\gen})_{\R} \rightarrow N^1(X)_{\R}$ as in \eqref{eq:phi_gen}. Then:
    \begin{enumerate}[\normalfont (a)]
        \item $ \phi(\mathcal{M}^{\irr}_{X_{\gen}}) = \mathcal{N}_X, $ in particular, the set $\phi(\mathcal{M}^{\irr}_{X_{\gen}})\subseteq N^1(X)_{\R}$ does not depend on the choice of $X_{\gen}$,
        \item $\Eff(X)_{\gen} = \phi\big(\Eff(X_{\gen})\big) \subseteq N^1(X)_{\R}$ is a closed cone,
      \item $\Nef(X)_{\gen} = \phi\big(\Nef(X_{\gen})\big) \subseteq N^1(X)_{\R}$ is dual to the cone $\Eff(X)_{\gen}$.
    \end{enumerate}
\end{cor}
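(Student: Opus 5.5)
The plan is to prove (a) first and deduce (b) and (c) from it by linear algebra.

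For (a), recall that \cite[Lemma 5.1(d)]{LSX26b} (or rather the characterisation \cite[Lemma 2.6]{LSX26b} applied to the generic surface $X_{\gen}$) gives
\[
\mathcal{M}^{\irr}_{X_{\gen}}=\{E\in\mathcal{M}_{X_{\gen}}\mid E\cdot \mathcal{D}\geq 0 \text{ for all } \mathcal{D}\in\Comp(D_{\gen})\},
\]
because $\Delta^{\nod}_{X_{\gen},D_{\gen}}=\emptyset$. In other words, $\mathcal{M}^{\irr}_{X_{\gen}}=\mathcal{N}_{X_{\gen}}$. Now transport this set by $\phi$:
\begin{enumerate}[\normalfont (i)]
\item Lemma \ref{lem:isoN1}(d) gives $\phi(\mathcal{M}_{X_{\gen}})=\mathcal{M}_X$.
\item Lemma \ref{lem:isoN1}(c) gives $\phi(\Comp(D_{\gen}))=\Comp(D)$.
\item Lemma \ref{lem:isoN1}(a) shows that intersection numbers with the components are preserved.
\end{enumerate}
Hence $\phi(\mathcal{N}_{X_{\gen}})=\mathcal{N}_X$. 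Independence of the choice of $X_{\gen}$ is immediate, since $\mathcal{N}_X$ is defined purely in terms of $X$.

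For (b), I would use the description in Remark \ref{rem:extremal_rays_eff}: $\Eff(X_{\gen})$ is a closed convex cone spanned by $\mathcal{M}^{\irr}_{X_{\gen}}\cup\Comp(D_{\gen})$, since there are no internal curves. Applying the linear isomorphism $\phi$ and using (a) and Lemma \ref{lem:isoN1}(c), the image $\phi(\Eff(X_{\gen}))$ is the convex hull of $\mathcal{N}_X\cup\Comp(D)$, which is $\Eff(X)_{\gen}$ by definition. Closedness is preserved under the linear homeomorphism $\phi$.

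The one point needing care is that "spanned" in \cite[Lemma 5.1]{LSX26b} means the convex cone, and not its closure. I would check that closedness there is a genuine statement, so that the cone generated equals its closure. If only the closure statement were available, I would still get $\phi(\Eff(X_{\gen}))=\overline{\Eff(X)_{\gen}}$, and closedness of the generated cone would have to be argued directly, via local finiteness of $\mathcal{N}_X$ in the region $\delta\cdot x>0$.

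For (c), the cone $\Nef(X_{\gen})$ is the dual of $\Eff(X_{\gen})$ under the intersection pairing, because $\Eff(X_{\gen})$ is closed and equals $\overline{\Eff}(X_{\gen})$ by \cite[Theorem 3.1]{LSX26a}. Since $\phi$ preserves the intersection pairing by Lemma \ref{lem:isoN1}(a), it maps this dual to the dual of $\phi(\Eff(X_{\gen}))=\Eff(X)_{\gen}$. That dual is
\[
\{x\mid x\cdot\gamma\geq0 \ \text{for all}\ \gamma\in\mathcal{N}_X\cup\Comp(D)\}=\Nef(X)_{\gen},
\]
which gives both statements of (c).

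The main obstacle is only the bookkeeping in (b) about closedness; the rest is formal transport of structure.
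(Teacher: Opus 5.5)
Your proof is correct and follows the same route as the paper. For (a) you apply \cite[Lemma 2.6]{LSX26b} to $X_{\gen}$, where $\Delta^{\nod}$ is empty, and transport via Lemma~\ref{lem:isoN1}; (b) then follows from (a) and the description of $\Eff(X_{\gen})$ as the closed cone spanned by $\mathcal{M}^{\irr}_{X_{\gen}}\cup\Comp(D_{\gen})$, and (c) follows from (b) because $\phi$ is an isometry. Your worry about closedness is unnecessary, since the cited result, as recalled in Remark~\ref{rem:extremal_rays_eff}, already states that the spanned cone is closed.
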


\begin{proof}
Part (b) follows directly from part (a) and \cite[Proposition 5.1(a)(d)]{LSX26b}. Part (c) follows from part (b) and Lemma \ref{lem:isoN1}(a).

Now we show (a). Let $D\in|{-}K_X|$ and $D_{\gen}\in|{-}K_{X_{\gen}}|$. In $N^1(X)_{\R}$ we have, by \cite[Lemma 2.6]{LSX26b},
	$$ \mathcal{M}^{\irr}_X = \big\{ E \in \mathcal M_X \mid E\cdot C \geq 0 \text{ for all } C \in \Comp(D) \cup \Delta^{\nod}_X \big\}. $$
	Similarly, by Definition \ref{dfn:goodHalphen} and by Lemma \ref{lem:isoN1} we have
	\begin{align*}
	\phi&(\mathcal{M}^{\irr}_{X_{\gen}}) = \big\{ \phi(E)\mid E\in \mathcal M_{X_{\gen}}, \ E\cdot \cal{D}_{\gen} \geq 0 \text{ for all } \cal{D}_{\gen} \in \Comp(D_{\gen})\big\}\\
	&= \big\{ \phi(E)\mid E\in \mathcal M_{X_{\gen}}, \ \phi(E)\cdot \phi(\cal{D}_{\gen}) \geq 0 \text{ for all } \cal{D}_{\gen} \in \Comp(D_{\gen}) \big\}\\
	& = \big\{ F\in \mathcal M_X \mid F\cdot \cal{D} \geq 0 \text{ for all } \cal{D} \in \Comp(D) \big\}.	
	\end{align*}
	This shows the result.
\end{proof}

\begin{lem}\label{lem:cr_gen}
    Let $X$  be a Sakai surface and let $\phi\colon N^1(X_{\gen})_{\R} \rightarrow N^1(X)_{\R}$ be the isomorphism as in \eqref{eq:phi_gen}. Then the induced action of $\Cr(X_{\gen},D_{\gen})$ on $N^1(X)_{\R}$ preserves $\Nef(X)_{\gen}$ and $\Eff(X)_{\gen}$
\end{lem}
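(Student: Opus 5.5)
The plan is to reduce everything to the generic surface via the isomorphism $\phi$ of \eqref{eq:phi_gen}. By Corollary \ref{cor:Mirred}(b)(c) we have $\Eff(X)_{\gen}=\phi\big(\Eff(X_{\gen})\big)$ and $\Nef(X)_{\gen}=\phi\big(\Nef(X_{\gen})\big)$. Following Remark \ref{rem:transport_group_action}, an element $g\in\Cr(X_{\gen},D_{\gen})$ acts on $N^1(X)_\R$ as $g'=\phi\circ g\circ\phi^{-1}$. Then
$$ g'\big(\Eff(X)_{\gen}\big)=\phi\big(g(\Eff(X_{\gen}))\big)=\phi\big(\Eff(X_{\gen})\big)=\Eff(X)_{\gen}, $$
where the middle equality is condition (iv) of Definition \ref{dfn:cremona} applied to $g\in\Cr(X_{\gen})$.

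For the nef cone there are two routes. The first uses that $g$ preserves $\Nef(X_{\gen})$: this holds because $g$ preserves both the intersection form and $\Eff(X_{\gen})$, and by \cite[Theorem 3.1]{LSX26a} the cone $\Nef(X_{\gen})$ is the dual of $\Eff(X_{\gen})=\overline{\Eff}(X_{\gen})$. Applying $\phi$ then gives the claim. The second route is a direct check. By Lemma \ref{lem:isoN1}(a), $g'$ preserves the intersection form on $N^1(X)_\R$. So if $x\in\Nef(X)_{\gen}$ and $y\in\Eff(X)_{\gen}$, then
$$ g'(x)\cdot y = x\cdot g'^{-1}(y)\geq0, $$
since $g'^{-1}(y)\in\Eff(X)_{\gen}$ by the previous step. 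Together with the duality in Corollary \ref{cor:Mirred}(c), this shows $g'(x)\in\Nef(X)_{\gen}$.

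Equality rather than mere inclusion follows by applying the same argument to $g^{-1}$. The only subtle point is the duality statement, namely that $\Nef(X)_{\gen}$ is exactly the dual of the cone spanned by $\cal N_X\cup\Comp(D)$. This comes from Definition \ref{dfn:eff_gen_nef_gen} together with the closedness in Corollary \ref{cor:Mirred}(b), so I expect no real obstacle.
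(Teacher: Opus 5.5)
Your proposal is correct, and your first route is exactly the paper's argument. You conjugate by $\phi$ and apply Corollary \ref{cor:Mirred}(b)(c), and you also make explicit why $g$ preserves $\Nef(X_{\gen})$ (it preserves the intersection form and $\Eff(X_{\gen})$, and the nef cone is the dual cone), a step the paper leaves implicit.

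Your second, direct route also works, and it needs no closedness: a class is nonnegative on $\mathcal{N}_X\cup\Comp(D)$ if and only if it is nonnegative on their convex hull, so Definition \ref{dfn:eff_gen_nef_gen} alone gives the duality you use.
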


\begin{proof}
     Let $g \in \Cr(X_{\gen},D_{\gen})$ and set $g' \coloneqq \phi \circ g \circ \phi^{-1} \in \GL(N^1(X)_{\R})$ as in Remark \ref{rem:transport_group_action}. Then by Corollary \ref{cor:Mirred}(b)(c) we have
$$ g'\big(\Nef(X)_{\gen}\big) = \phi\big(g(\Nef(X_{\gen})\big) = \phi\big(\Nef(X_{\gen})\big) = \Nef(X)_{\gen}, $$
and similarly, $g'\big(\Eff(X)_{\gen}\big)=\Eff(X)_{\gen}$. 
\end{proof}

The next lemma shows some properties of the generic nef cone of a Sakai surface.
   \begin{lem}\label{lem:nefgen_in_cplus}
   	Let $X$ be a Sakai surface. Then: 
   	\begin{enumerate}[\normalfont (a)]
    \item $\Nef(X)  \subseteq \Nef(X)_{\gen}$,
   	\item $ \Nef(X)_{\gen} \subseteq \overline{\mathcal{C}^+_X}$,
    \item $ \Nef(X)_{\gen} = \NefE(X)_{\gen}$,
   	\item $ \Nef(X)_{\gen} \subseteq \mathcal{T}(E^{(1)}_8) \subseteq \mathcal{T}(R^\smperp).$
   	\end{enumerate}
   	\end{lem}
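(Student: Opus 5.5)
Parts (a)--(c) follow formally from the definitions and Corollary \ref{cor:Mirred}, with Part (d) reducing to the generic case; I would argue them in order.

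For (a), take $x\in\Nef(X)$. Every element of $\Comp(D)$ is the class of a curve. By \eqref{eq:eff_gen_inclusion} every element of $\cal N_X$ lies in $\Eff(X)$. Hence $x\cdot\gamma\ge0$ for all $\gamma\in\cal N_X\cup\Comp(D)$, so $x\in\Nef(X)_{\gen}$. Equivalently, $\Eff(X)_{\gen}\subseteq\Eff(X)$ dualises to the reverse inclusion of nef cones.

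For (b) and (c), I would transport everything to a generic surface. By Corollary \ref{cor:Mirred}(c) we have $\Nef(X)_{\gen}=\phi\big(\Nef(X_{\gen})\big)$, and by Lemma \ref{lem:isoN1} the map $\phi$ preserves the intersection form and sends $[{-}K_{X_{\gen}}]$ to $[{-}K_X]$. On the smooth projective surface $X_{\gen}$, any nef class has nonnegative square and nonnegative intersection with an ample class. So $\Nef(X_{\gen})\subseteq\overline{\mathcal C^+_{X_{\gen}}}$.

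The one subtlety is that $\phi$ need not send an ample class of $X_{\gen}$ to an ample class of $X$. I would handle it as follows. The positive cone $\mathcal C^+$ is the connected component of $\{x^2>0\}$ containing the class $\cal E_0$, since $\cal E_0$ is nef and big on both surfaces. Alternatively, it is the component on which $x\cdot\delta\ge0$, and $\delta\in\overline{\mathcal C^+}$ is fixed by $\phi$ up to identification. Because $\phi(\cal E_0)=\cal E_0$, the map $\phi$ identifies the positive cones, which gives (b).

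For (c), \cite[Theorem 3.1]{LSX26a} on $X_{\gen}$ gives $\Nef(X_{\gen})\subseteq\Eff(X_{\gen})$. Applying $\phi$ and Corollary \ref{cor:Mirred}(b) yields $\Nef(X)_{\gen}\subseteq\Eff(X)_{\gen}\subseteq\Eff(X)$ by \eqref{eq:eff_gen_inclusion}. This is the statement $\Nef(X)_{\gen}=\NefE(X)_{\gen}$ under the natural reading that $\NefE(X)_{\gen}$ means $\Nef(X)_{\gen}\cap\Eff(X)_{\gen}$.

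For (d), I would use \eqref{eq:tautological_tits_cone}. Let $x\in\Nef(X)_{\gen}$. The class $\delta=\sum n_i\cal D_i$ has $n_i>0$, so $x\cdot\delta\ge0$. If $x\cdot\delta>0$ then $x\in\mathcal T(E_8^\smone)$. If $x\cdot\delta=0$, then $x\in\overline{\mathcal C^+}$ by (b) and $\delta^2=0$, and the equality case of the Hodge index theorem forces $x\in\R_{\ge0}\delta$, which is again in $\mathcal T(E_8^\smone)$.

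The second inclusion $\mathcal T(E_8^\smone)\subseteq\mathcal T(R^\smperp)$ is the step I expect to be the main obstacle. It would follow from the analogous description $\mathcal T(R^\smperp)=\{x\mid x\cdot\delta>0\}\cup\R\delta$ of the Tits cone of an affine root datum whose imaginary roots are generated by $\delta$. I would quote this from \cite[\S4.7, \S4.8]{LSX26b} or \cite[Proposition 5.8]{Kac90}. In the decomposable cases $R\in\{A_5^\smone,A_6^\smone,D_6^\smone\}$, I would use \eqref{eq:decompose_W} and check the description for each factor $\mathscr D,\mathscr D'$ separately, both having imaginary roots generated by $\delta$. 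The point is that in the affine case the interior of the Tits cone is exactly the half-space $\{x\cdot\delta>0\}$, together with the boundary ray $\R\delta$.
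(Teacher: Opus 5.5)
Your argument is correct, and for (a), (c) and the first inclusion in (d) it is the paper's argument. In (b) you are more careful than the paper, which only checks $x^2\ge 0$ and never addresses the half-space condition $x\cdot H\ge 0$. Your identification of the positive cones via $\phi(\mathcal E_0)=\mathcal E_0$ fills this; alternatively it follows at once from (c), since effective classes meet $H$ nonnegatively. In (d), getting $x\cdot\delta\ge 0$ directly from the components of $D$ is fine. So is using the equality case of Hodge index to get $x\in\R_{\ge0}\delta$, which is the correct real-coefficient version of the paper's conclusion $x\in\Z\delta$.

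The one place to correct is the second inclusion $\mathcal T(E_8^\smone)\subseteq\mathcal T(R^\smperp)$, which the paper simply cites from \cite[Proposition 4.3(c)]{LSX26b}. Your derivation rests on the equality $\mathcal T(R^\smperp)=\{x\mid x\cdot\delta>0\}\cup\R\delta$, and this is false as soon as $R\neq A_0^\smone$. The reason is that $N^1(X)_\R$ is not a minimal realisation of the affine datum $R^\smperp$. The common kernel of the simple coroots is the orthogonal complement of $Q(R^\smperp)$, namely $\operatorname{span}_\R\Comp(D)$, not just $\R\delta$. Concretely, $\pm\mathcal D_i\in\overline{\mathcal F(R^\smperp)}$ for every component class $\mathcal D_i$, even though $\mathcal D_i\cdot\delta=0$ and $\mathcal D_i^2=-2$.

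The correct description is $\mathcal T(R^\smperp)=\{x\mid x\cdot\delta>0\}\cup\operatorname{span}_\R\Comp(D)$. You obtain it from the minimal-realisation statement in \cite{Kac90} by passing to the quotient by this kernel, which $W(R^\smperp)$ fixes pointwise. In the decomposable cases you also need that $\mathcal T(R^\smperp)$ is the intersection of the Tits cones of $\mathscr D$ and $\mathscr D'$. This holds because each factor fixes the other's coroots, a point your sketch leaves implicit.

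Your argument only uses the inclusion $\supseteq$, and $\delta\in\operatorname{span}_\R\Comp(D)$, so the conclusion survives. You should, however, state and prove it as an inclusion rather than an equality, or just cite \cite[Proposition 4.3(c)]{LSX26b} as the paper does.
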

   	
   \begin{proof} 
   We first note that part (a) follows immediately from \eqref{eq:eff_gen_inclusion} and Corollary \ref{cor:Mirred}(c).
   
   Let $X_{\gen}$ be a generic Sakai surface of the same type as $X$, and 
let $\phi\colon N^1(X_{\gen})_{\R} \to N^1(X)_{\R}$ be the isomorphism as in \eqref{eq:phi_gen}.

For part (b), let $x \in \Nef(X)_{\gen}$. Then $y \coloneqq \phi^{-1}(x) \in \Nef(X_{\gen})$, and hence $y^2 \geq 0$. Therefore, Lemma \ref{lem:isoN1}(a) yields
   	$$ x^2  = \big(\phi(y)\big)^2 = y^2 \geq 0, $$
   	which shows (b).
  
    For part (c), it suffices to show that $\Nef(X)_{\gen} \subseteq \Eff(X)$.
We first note that $\Nef(X_{\gen}) \subseteq \Eff(X_{\gen})$ by \cite[Theorem 3.1(b)]{LSX26a}. Thus, using the isomorphism $\phi$ and \eqref{eq:eff_gen_inclusion}, we obtain 
	$$ \Nef(X)_{\gen} \subseteq \Eff(X)_{\gen} \subseteq \Eff(X). $$

Finally, we show part (d). Let $x \in \Nef(X)_{\gen}$. By part (c) we have $\delta\cdot x \geq 0$.  If  $\delta\cdot x > 0$, then $x \in  \mathcal{T}(E^{(1)}_8)$ by \eqref{eq:tautological_tits_cone}. Now assume that $\delta \cdot x = 0$. Moreover, as $x^2 \geq 0$ by part (b), we infer from \cite[Lemma 2.2]{LH05} that $x \in \Z \delta$. Again using \eqref{eq:tautological_tits_cone}, we obtain $x \in \mathcal{T}(E^{(1)}_8)$. The second inclusion follows from \cite[Proposition 4.3(c)]{LSX26b}.
   	\end{proof}
In the remainder of this section, we restrict to non-generic Sakai surfaces. By Corollary \ref{cor:generic_types}, this implies that the type $R$ satisfies  
 \begin{equation}\label{eq:non_generic_types}
   R \notin \{ A_7^\smone, A_8^\smone, D_7^\smone, D_8^\smone, E_8^\smone \}.   
 \end{equation}
    Using the negative fundamental chamber defined by $\Delta^{\nod}_{X,D}$ as in Section \ref{sec:nodal_weyl}, we can refine Lemma \ref{lem:nefgen_in_cplus}(a).
    
    \begin{lem}\label{lem:nef_fundchamber}
    Let $X$ be a non-generic Sakai surface. Then 
    	$$\Nef(X) = \Nef(X)_{\gen} \cap \overline{\cal{F}^{\nod}}. $$
    \end{lem}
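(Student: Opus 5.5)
The plan is to identify both sides of the equality with duals of explicit cones. By Remark \ref{rem:extremal_rays_eff}, $\Eff(X)$ is the closed convex cone spanned by $\cal M^{\irr}_X\cup\Comp(D)\cup\Delta^{\nod}_{X,D}$. Hence $\Nef(X)$ is the set of $x\in N^1(X)_\R$ with $x\cdot\gamma\ge0$ for every $\gamma$ in this union. The right-hand side is the set of $x$ with $x\cdot\gamma\ge0$ for $\gamma\in\cal N_X\cup\Comp(D)\cup\Delta^{\nod}_{X,D}$; for the last set this uses the definition of $\overline{\cal F^{\nod}}$. So it suffices to compare the cones generated by $\cal M^{\irr}_X$ and by $\cal N_X$, modulo the generators $\Comp(D)\cup\Delta^{\nod}_{X,D}$.

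\emph{Inclusion $\subseteq$.} Let $x\in\Nef(X)$. Then $x\in\Nef(X)_{\gen}$ by Lemma \ref{lem:nefgen_in_cplus}(a). Moreover $x\cdot\gamma_i\ge0$ for each internal $(-2)$-curve class $\gamma_i$, since these are effective. Thus $x\in\overline{\cal F^{\nod}}$.

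\emph{Inclusion $\supseteq$.} Let $x\in\Nef(X)_{\gen}\cap\overline{\cal F^{\nod}}$. It pairs nonnegatively with $\Comp(D)$ and with $\Delta^{\nod}_{X,D}$ by definition. It remains to check $x\cdot E\ge0$ for every $E\in\cal M^{\irr}_X$. By \cite[Lemma 2.6]{LSX26b}, such an $E$ lies in $\cal M_X$ and satisfies $E\cdot\cal D_i\ge0$ for all $\cal D_i\in\Comp(D)$. Hence $E\in\cal N_X$, and $x\cdot E\ge0$ follows from the definition of $\Nef(X)_{\gen}$. So $\cal M^{\irr}_X\subseteq\cal N_X$, and the inclusion holds.

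The only point needing care is the description of $\Eff(X)$ as a cone generated by exactly these three sets, including closedness, so that nefness can be tested on the generators. This is supplied by Remark \ref{rem:extremal_rays_eff} together with $\Nef(X)$ being dual to $\overline{\Eff}(X)=\Eff(X)$ \cite[Theorem 3.1]{LSX26a}. I do not expect a genuine obstacle; the argument is essentially formal once these facts are in place.
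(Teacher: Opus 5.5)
Your proposal is correct and follows the paper's proof essentially step for step. The inclusion $\subseteq$ comes from Lemma \ref{lem:nefgen_in_cplus}(a) together with effectivity of the internal $(-2)$-curves, a detail you state explicitly where the paper leaves it implicit. The inclusion $\supseteq$ comes from $\cal{M}^{\irr}_X\subseteq\cal{N}_X$ via \cite[Lemma 2.6]{LSX26b} and the generators of the closed cone $\Eff(X)$ in Remark \ref{rem:extremal_rays_eff}.
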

    
    \begin{proof}
    	One inclusion is clear by Lemma \ref{lem:nefgen_in_cplus}(a). For the opposite inclusion, assume that $x \in  \Nef(X)_{\gen}$ satisfies $ x\cdot \gamma \geq 0$ for all $\gamma\in \Delta_{X,D}^{\nod}$. Then $x  \cdot \gamma \geq 0$ for all $\gamma \in \mathcal{N}_X\cup \Comp(D) \cup \Delta_{X,D}^{\nod}$. Since $\mathcal{M}^{\irr}_X \subseteq \mathcal{N}_X$ by \cite[Lemma 2.6]{LSX26b}, we conclude that $x \cdot \gamma \geq 0$ for all $\gamma \in \mathcal{M}^{\irr}_X\cup \Comp(D) \cup \Delta_{X,D}^{\nod}$. Hence, by Remark \ref{rem:extremal_rays_eff}, we obtain $x\in \Nef(X)$, as desired.
    \end{proof}
    
\begin{lem}\label{lem:wnod_preserves_nef_gen}
	Let $X$ be a non-generic Sakai surface of type $R$, and let $X_{\gen}$ be a generic Sakai surface of the same type. Let $D_{\gen} \in |{-}K_{X_{\gen}}|$, and consider $\Cr(X_{\gen},D_{\gen})$ as a subgroup of $\GL \big(N^1(X)_{\R}\big)$ via the isomorphism \eqref{eq:phi_gen}. Then $W^{\nod}$ is a subgroup of $\Cr(X_{\gen},D_{\gen})$. In particular, $W^{\nod}$ preserves $\Nef(X)_{\gen}$.
	
\end{lem}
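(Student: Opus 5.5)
Since $W^{\nod}$ is generated by the reflections $r_{\gamma_i}$, it suffices to show that each $r_{\gamma}$ with $\gamma\in\Delta^{\nod}_{X,D}$, transported to $N^1(X_{\gen})_\R$ via $\phi^{-1}$, satisfies conditions (i)--(iv) of Definition \ref{dfn:cremona} for $X_{\gen}$ and fixes the classes of the components of $D_{\gen}$. Set $\gamma'\coloneqq\phi^{-1}(\gamma)$. By Lemma \ref{lem:isoN1}(a), $\phi^{-1}\circ r_\gamma\circ\phi = r_{\gamma'}$, where $r_{\gamma'}(x)=x+(\gamma'\cdot x)\gamma'$. Conditions (i)--(iii) follow directly. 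Indeed, $\gamma'\in N^1(X_{\gen})$ is integral because $\phi$ maps tautological bases to each other. We have $\gamma'^2=-2$, so $r_{\gamma'}$ is an isometry. Moreover, $\gamma'\cdot K_{X_{\gen}}=\gamma\cdot K_X=0$ by Lemma \ref{lem:isoN1}(a)(b), so $r_{\gamma'}$ fixes $[K_{X_{\gen}}]$. Finally, $\gamma'\cdot\cal D'_i=\gamma\cdot\cal D_i=0$ by Lemma \ref{lem:isoN1}(c), so $r_{\gamma'}$ fixes every element of $\Comp(D_{\gen})$.

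The substantive point is (iv): $r_{\gamma'}$ preserves $\Eff(X_{\gen})$. By \cite[Lemma 5.1(a)(d)]{LSX26b}, $\Eff(X_{\gen})$ is the closed convex cone spanned by $\cal M^{\irr}_{X_{\gen}}\cup\Comp(D_{\gen})$, and by Corollary \ref{cor:Mirred}(a),(b) it is identified via $\phi$ with $\Eff(X)_{\gen}$, the hull of $\cal N_X\cup\Comp(D)$. Since $r_\gamma$ fixes $\Comp(D)$, it suffices to show $r_\gamma(\cal N_X)=\cal N_X$. Let $E\in\cal N_X$. Then $r_\gamma(E)^2=-1$ and $r_\gamma(E)\cdot K_X=-1$, because $r_\gamma$ is an isometry fixing $K_X$. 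For every component class $\cal D_i$, we have $r_\gamma(E)\cdot\cal D_i=E\cdot r_\gamma(\cal D_i)=E\cdot\cal D_i\geq0$. It remains to check that $r_\gamma(E)$ lies in $\cal M_X$, i.e.\ that it satisfies whatever effectivity condition is built into the definition of $\cal M_X$. For this I would use the characterisation in \cite[Lemma 2.3(b)]{LSX26b}, which says that the numerical conditions $E^2=E\cdot K_X=-1$ already imply $E\in\cal M_X$. This is the same fact that was used for Lemma \ref{lem:isoN1}(d). Hence $r_\gamma(\cal N_X)\subseteq\cal N_X$, and since $r_\gamma$ is an involution we get equality. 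Therefore $r_{\gamma'}$ preserves $\Eff(X_{\gen})$, and so $r_{\gamma'}\in\Cr(X_{\gen},D_{\gen})$.

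The main obstacle is exactly this step. The set $\cal M_X$ is defined with a condition $h^0\geq0$, and one must be sure that it is determined purely numerically, so that the reflection, which need not preserve effectivity on $X$ itself, still preserves $\cal N_X$. If \cite[Lemma 2.3(b)]{LSX26b} gives this, as Lemma \ref{lem:isoN1}(d) suggests, the argument is formal. If it does not, my fallback is Riemann--Roch on $X$: for an integral class $E$ with $E^2=K_X\cdot E=-1$ we get $\chi(E)=1$, and $h^2(E)=h^0(K_X-E)=0$ because $(K_X-E)\cdot H<0$ for an ample $H$ once $E\cdot H>0$. This last inequality would follow from $E\cdot\delta=1$ together with Hodge index considerations.

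The final assertion then follows from Lemma \ref{lem:cr_gen}: $\Cr(X_{\gen},D_{\gen})$ preserves $\Nef(X)_{\gen}$, and therefore so does its subgroup $W^{\nod}$.
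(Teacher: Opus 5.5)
Your argument is correct, but it takes a different route from the paper. The paper never checks the Cremona axioms directly. For $R\neq A_6^{\smone}$ it quotes \cite[Theorem 26(a)]{Sak01}, which identifies $\Cr(X_{\gen},D_{\gen})$ with $W(R^{\smperp})$. It then uses Lemma \ref{lem:nodal_roots}(a), that every internal $(-2)$-class is a positive real root of $R^{\smperp}$, to conclude $W^{\nod}\subseteq W(R^{\smperp})$. That lemma is proved by a case analysis on whether $R^{\smperp}$ is decomposable and whether $\gamma$ is exceptional over $\pr^2$. The case $R=A_6^{\smone}$ is handled separately, via Remark \ref{rem:the_case_A6} and \cite[Setup I(b)]{LSX26b}.

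You instead verify directly that each generating reflection, transported by $\phi$, is a Cremona isometry of $X_{\gen}$ fixing $\Comp(D_{\gen})$. The only nontrivial axiom is handled by $\phi\big(\Eff(X_{\gen})\big)=\Eff(X)_{\gen}$ (Corollary \ref{cor:Mirred}) together with the purely numerical nature of $\mathcal M_X$.

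What each route buys:
\begin{itemize}
\item Your route is uniform in $R$ and avoids both Sakai's identification of $\Cr(X_{\gen},D_{\gen})$ and the root-theoretic Lemma \ref{lem:nodal_roots}.
\item The same argument shows more: any lattice isometry of $N^1(X)$ fixing $[K_X]$ and every $\mathcal D_i$ transports into $\Cr(X_{\gen},D_{\gen})$.
\item The paper's route gives finer information: $W^{\nod}$ lies inside $W(R^{\smperp})$, respectively inside the affine factor $W_{\mathscr D'}$.
\item The paper needs that information anyway, for the Tits cone inclusion in the proof of Theorem \ref{thm:nef_fund_domain_for_nefgen} and for Lemma \ref{lem:not_normal}. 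So its extra machinery is not wasted there.
\end{itemize}

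Your fallback for effectivity of classes in $\mathcal M_X$ is unnecessary, since the citation suffices. As stated, its intermediate claim $E\cdot H>0$ does not follow from Hodge index considerations alone. The clean argument is as follows. The divisor $D\sim -K_X$ is nef, being of canonical type, and $(K_X-E)\cdot(-K_X)=K_X\cdot E=-1<0$. Hence $h^0(K_X-E)=0$, and Riemann--Roch gives $h^0(E)\geq\chi(E)=1$.
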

\begin{proof} 
In the following, we show that $W^{\nod}$ is contained in $\Cr(X_{\gen},D_{\gen})$. This, together with Lemma \ref{lem:cr_gen}, implies that $W^{\nod}$ preserves $\Nef(X)_{\gen}$.

If $R \neq A^{\smone}_6$, then together with \eqref{eq:non_generic_types}, we can apply \cite[Theorem 26(a)]{Sak01} to obtain $\Cr(X_{\gen},D_{\gen}) = W(R^\smperp)$. Thus by Lemma \ref{lem:nodal_roots}(a), we obtain $W^{\nod} \subseteq W(R^\perp)=\Cr(X_{\gen},D_{\gen})$, as desired.

If $R = A^{\smone}_6$, then by \eqref{eq:decompose_W} and Remark \ref{rem:the_case_A6} we have $W(R^\smperp) = W_{\mathscr{D}} W_{\mathscr{D}'}$ 
and $W^{\nod} \subseteq W_{\mathscr{D}'}$. Moreover, by \cite[Setup I(b)]{LSX26b} $W_{\mathscr{D}'}$ is a subgroup of $\Cr(X_{\gen},D_{\gen})$.
Therefore, $W^{\nod}$ is a subgroup of $\Cr(X_{\gen},D_{\gen})$, which finishes the proof. 
\end{proof}

Now we can show our main result of this section.

\begin{proof}[Proof of Theorem \ref{thm:nef_fund_domain_for_nefgen}]
We may assume that  $X$ is non-generic. First, we infer from Lemmas \ref{lem:nef_fundchamber} and \ref{lem:wnod_preserves_nef_gen} that 
$$ W^{\nod} \cdot \Nef(X) \subseteq \Nef(X)_{\gen}. $$
To show the equality, let $x \in \Nef(X)_{\gen}$. Then $x \in \cal{T}({R^{\nod}})$ by Lemmas \ref{lem:nefgen_in_cplus}(d) and \ref{lem:nodal_roots}(c). Thus, there exists an element $w\in W^{\nod}$ such that $w(x)\in \overline{\mathcal{F}^{\nod}}$. Since $W^{\nod}$ preserves $\Nef(X)_{\gen}$ by Lemma \ref{lem:wnod_preserves_nef_gen}, we conclude that
	$$ w(x) \in \overline{\cal F^{\nod}} \cap \Nef(X)_{\gen} = \Nef(X). $$ 
Finally, let $w\in W^{\nod}$ such that $\Int\big( w\big(\Nef(X)\big) \big)\cap \Int \big(\Nef(X) \big) \neq \emptyset $. Since
    $$ \Int\big( w\big(\Nef(X) \big) \big)\cap \Int \big(\Nef(X) \big) \subseteq \Int\big( w( \overline{\cal F^{\nod}} ) \big)\cap \Int \big( \overline{\cal F^{\nod}}  \big) $$
    by Lemma \ref{lem:nef_fundchamber}, we obtain $\Int\big( w \big( \overline{\cal F^{\nod}}\big) \big)\cap \Int \big( \overline{\cal F^{\nod}} \big) \neq \emptyset $. By \cite[Proposition 3.12]{LSX26b}, this implies that $w$ is the identity, which finishes the proof.
\end{proof}

\begin{rem}\label{rem:W_nod_not_normal}
    Using the notion of generic nef cones, the result of \cite[Theorem C]{LSX26b} translates to the following. Let $X$ be a Sakai surface. Then the action of $\Cr(X_{\gen},D_{\gen})$ on $\Nef(X)_{\gen}$ admits a rational polyhedral fundamental domain. In view of this result, together with Lemma \ref{lem:wnod_preserves_nef_gen} and Theorem \ref{thm:nef_fund_domain_for_nefgen}, one might expect that the quotient $\Cr(X_{\gen},D_{\gen})/W^{\nod}$ acts on $\Nef(X)$ with a rational polyhedral fundamental domain; see also \cite[Remark 9.2]{Fri15}. We show, however, that this approach does not work because $W^{\nod}$ is never a normal subgroup of $\Cr(X_{\gen},D_{\gen})$.
\end{rem}

\begin{lem}\label{lem:not_normal}
Let $X$ be a non-generic Sakai surface of type $R$, and let $X_{\gen}$ be a generic Sakai surface of the same type. Let $D_{\gen} \in |{-}K_{X_{\gen}}|$. Then $W^{\nod}$ is not a normal subgroup of $\Cr(X_{\gen},D_{\gen})$.
\end{lem}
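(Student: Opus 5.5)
Suppose, for contradiction, that $W^{\nod}$ is normal in $\Cr(X_{\gen},D_{\gen})$, viewed inside $\GL\big(N^1(X)_\R\big)$ via \eqref{eq:phi_gen}. The plan is to find an element $g\in\Cr(X_{\gen},D_{\gen})$ that moves a nodal root $\gamma\in\Delta^{\nod}_{X,D}$ to a real root lying outside the finite set $R^{\nod}$. This contradicts normality. Indeed, for any $g$ we have $g r_\gamma g^{-1}=r_{g(\gamma)}$, since $g$ preserves the intersection form. If $W^{\nod}$ is normal, then $r_{g(\gamma)}\in W^{\nod}$. But in the finite Weyl group $W^{\nod}$, which is finite by Proposition \ref{prop:nodel_root_system}(c), the reflections are exactly the $r_\beta$ with $\beta\in R^{\nod}$. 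Moreover $r_\beta=r_{\beta'}$ forces $\beta'=\pm\beta$, because $r_\beta$ determines $\beta$ up to sign as the generator of the $(-1)$-eigenspace. So normality implies $g(R^{\nod})\subseteq R^{\nod}$ for every $g\in\Cr(X_{\gen},D_{\gen})$, i.e.\ the $\Cr(X_{\gen},D_{\gen})$-orbit of $\gamma$ is finite.

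The key step is therefore to show that this orbit is infinite. By \cite[Theorem 26(a)]{Sak01}, as used in Lemma \ref{lem:wnod_preserves_nef_gen}, we have $\Cr(X_{\gen},D_{\gen})=W(R^\smperp)$ for $R\ne A_6^\smone$. When $R=A_6^\smone$ we instead use the affine factor $W_{\mathscr D'}\subseteq\Cr(X_{\gen},D_{\gen})$, which contains $W^{\nod}$ by Remark \ref{rem:the_case_A6}. By Lemma \ref{lem:nodal_roots}(a), $\gamma$ is a real root of the relevant affine root datum, and that datum has imaginary root $\delta$. For an affine Weyl group, the orbit of a real root $\gamma$ contains all $\gamma+k\ell\delta$ for $k\in\Z$ and some fixed integer $\ell\ge1$ (Kac, Proposition 6.3 and the description of real roots of affine type, e.g.\ $\Delta^{\re}=\{\alpha+n\delta\}$ together with the translation subgroup of finite index). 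Concretely, choose a real root $\beta$ of the same component with $\beta\cdot\gamma\neq0$; such a $\beta$ exists by indecomposability, for instance a simple root adjacent to $\gamma$ in its component. Then the translation $t=r_{\beta}r_{\beta+\delta}$ (or $r_{\beta+\delta}r_\beta$) acts by $x\mapsto x+(x\cdot\beta)(\text{const})\delta+\ldots$. A direct computation gives
\[
t^k(\gamma)=\gamma+k c\,\delta+ k(\ldots)\beta
\]
with $c\ne0$ whenever $\gamma\cdot\beta\ne0$. Since $W^{\nod}$ is finite and $\delta$ is not in $Q(\Delta^{\nod}_{X,D})$ (see the discussion before \eqref{eq:direct_sum}), these infinitely many distinct roots cannot all lie in $R^{\nod}$, which is the desired contradiction.

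The main obstacle is to guarantee that $\gamma$ is not orthogonal to every root of the affine datum that generates translations, i.e.\ that $\gamma$ genuinely lies in an affine component of $W(R^\smperp)$ and is not fixed by it. This holds because $\gamma$ is a real root of an indecomposable affine datum, by Step 1 of the proof of Lemma \ref{lem:nodal_roots} (in the decomposable case it lies in $\Delta^\re_{\mathscr D'}$ or $\Delta^\re_{\mathscr D}$). The cleanest route is then to cite that in an affine root datum the set $\gamma+\Z\ell\delta$ is contained in the Weyl orbit $W\gamma$ (Kac, Prop.\ 6.3). This yields infinitely many roots $g(\gamma)$, while $R^{\nod}$ is finite, finishing the proof. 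One must also check that in the decomposable cases $A_5^\smone,D_6^\smone$ the factor $W_{\mathscr D}$ or $W_{\mathscr D'}$ containing $\gamma$ lies in $\Cr(X_{\gen},D_{\gen})$, which follows from \cite[Theorem 26(a)]{Sak01} and \eqref{eq:decompose_W}.
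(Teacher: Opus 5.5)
Your proof is correct. You use the same case split as the paper: $R^\smperp$ indecomposable; $R\in\{A_5^\smone,D_6^\smone\}$, working in the affine factor containing $\gamma$; and $R=A_6^\smone$, working in $W_{\mathscr D'}$. The key step, however, is different.

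The paper argues group-theoretically. It invokes Maxwell's result on normal subgroups of irreducible affine Coxeter groups \cite[Corollary 0.3]{Max98}, which forces the nontrivial normal subgroup $W^{\nod}$ (resp.\ $W^{\nod}\cap W_{\mathscr D}$) to have finite index in the infinite group $W(R^\smperp)$, $W_{\mathscr D'}$ or $W_{\mathscr D}$. This is impossible since $W^{\nod}$ is finite.

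You instead prove the needed special case directly. Normality together with $g r_\gamma g^{-1}=r_{g(\gamma)}$ traps the $\Cr(X_{\gen},D_{\gen})$-orbit of $\gamma$ inside the finite set $R^{\nod}$. On the other hand, translations in the affine factor containing $\gamma$ produce the infinite family $\gamma+kc\delta$. The paper's route is shorter given the citation. Yours is elementary, uses only basic facts about finite and affine Weyl groups, and exhibits an explicit $g$ with $g r_\gamma g^{-1}\notin W^{\nod}$.

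Two points should be tightened.

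First, justify in a line that an element of $W^{\nod}$ acting as a reflection on $N^1(X)_\R$ must be $r_\beta$ for some $\beta\in R^{\nod}$. Let $V=\operatorname{span}\Delta^{\nod}_{X,D}$; it is negative definite, so $N^1(X)_\R=V\oplus V^{\perp}$. Such an element fixes $V^{\perp}$, so its mirror is a hyperplane containing $V^{\perp}$. If the mirror were none of the finitely many hyperplanes $\beta^{\perp}$ with $\beta\in R^{\nod}$, it would contain a point of some open chamber $w(\mathcal{F}^{\nod})$. This contradicts simple transitivity \cite[Proposition 3.12]{LSX26b}. Hence the mirror is some $\beta^{\perp}$, so $g(\gamma)=\pm\beta$ because both have square $-2$.

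Second, your translation formula has no $\beta$-term. For $x\cdot\delta=0$ one gets exactly $r_{\beta+\delta}r_\beta(x)=x+\tfrac{2(x\cdot\beta)}{\beta^2}\,\delta$. Moreover $\delta\notin V$, since $\delta^2=0$ while $V$ is negative definite. So already the single class $t(\gamma)=\gamma+c\delta$ with $c\neq0$ lies outside $V\supseteq R^{\nod}$. This makes both the finiteness count and the appeal to $Q(\Delta^{\nod}_{X,D})$ unnecessary.
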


\begin{proof}
Suppose that $W^{\nod}\subseteq \Cr(X_{\gen}, D_{\gen})$ is a normal subgroup. Recall from Proposition \ref{prop:nodel_root_system}(c) that $W^{\nod}$ is finite. Moreover, it follows from \eqref{eq:non_generic_types} and \cite[Theorem 26(a)]{Sak01} that
$$\Cr(X_{\gen}, D_{\gen})= W(R^{\smperp}) \text{ if } R\neq A_6^{\smone}.$$

If $R^{\smperp}$ is indecomposable, then $\Cr(X_{\gen}, D_{\gen})$ is a Weyl group of affine type. But then  \cite[Corollary 0.3]{Max98} implies that the finite group $W^{\nod}$ is of finite index in the infinite group $\Cr(X_{\gen}, D_{\gen})$, a contradiction. 

Now if $R^{\smperp}$ is decomposable, that means \ $R  \in \{A^{\smone}_5,A^{\smone}_6,D^{\smone}_6\}$ by \cite[\S 4.8]{LSX26b}, then 
$$W(R^\smperp) = W_{\mathscr{D}}W_{\mathscr{D}'}$$ 
by \eqref{eq:decompose_W}.
If $R = A^{\smone}_6$, then by Remark \ref{rem:the_case_A6} and \cite[Setup I(b)]{LSX26b} we have 
$$W^{\nod} \subseteq W_{\mathscr{D}'} \subseteq \Cr(X_{\gen},D_{\gen}).$$ 
Thus $W^{\nod}$ is a finite normal subgroup of the affine Weyl group $W_{\mathscr{D}'}$, which again contradicts \cite[Corollary 0.3]{Max98}. 

If $R\in \{A^{\smone}_5,D^{\smone}_6\}$, let $ \gamma \in \Delta^{\nod}_{X,D}$. Then $\gamma$ is either in $\Delta_{\mathscr{D}}^{\re}$ or in $\Delta_{\mathscr{D}'}^{\re}$, see Step 1(b) in the proof of Corollary \ref{lem:nodal_roots}(a). We prove the case when $\gamma \in \Delta_{\mathscr{D}}^{\re}$, the other case is analogous. By assumption, $W^{\nod}$ is a normal subgroup of $W_{\mathscr{D}}W_{\mathscr{D}'}$. Thus, the non-trivial finite group $W^{\nod} \cap W_{\mathscr{D}}$ is a normal subgroup of the affine Weyl group $W_{\mathscr{D}}$, which again contradicts \cite[Corollary 0.3]{Max98}.
\end{proof}

	\bibliographystyle{amsalpha}
	\bibliography{biblio}
	
\end{document}